\documentclass[11pt]{amsart}
\usepackage{amssymb, amsmath, amsthm,ulem}
\usepackage[latin1]{inputenc}
\usepackage{graphicx}
\usepackage[hidelinks]{hyperref}
\usepackage{color}
\usepackage{epstopdf}
\usepackage{cancel}
\usepackage{tikz}
 \usepackage{enumitem}

\usepackage[a4paper, centering]{geometry}

\usepackage{scalerel,stackengine}

\usepackage{mathtools}

\newtheorem{theorem}{Theorem}
\newtheorem{proposition}[theorem]{Proposition}

\newtheorem{lemma}[theorem]{Lemma}

\theoremstyle{definition}
\newtheorem{definition}[theorem]{Definition}
\theoremstyle{remark}

\definecolor{verde}{RGB}{20,150,100}
\definecolor{purple}{RGB}{200,30,200}
\definecolor{tortora}{RGB}{145, 135, 125}

\def\HH{\mathcal{H}}

\stackMath
\newcommand\reallywidecheck[1]{%
\savestack{\tmpbox}{\stretchto{%
  \scaleto{%
    \scalerel*[\widthof{\ensuremath{#1}}]{\kern-.6pt\bigwedge\kern-.6pt}%
    {\rule[-\textheight/2]{1ex}{\textheight}}
  }{\textheight}%
}{0.5ex}}%
\stackon[1pt]{#1}{\scalebox{-1}{\tmpbox}}%
}

\def\T{\tau}

\def\R{\mathbb{R}}

\newcommand{\Om}{\Omega}

\def \e{\varepsilon}

\newcommand{\vphi}{\varphi}

\begin{document}
\title[]{ 
Proof of the local version of the P\'olya--Szeg\"o conjecture 
for the torsional rigidity of polygons 
}

\bigskip\bigskip

\vfill\eject 

\author[]{Beniamin Bogosel, Dorin Bucur, Ilaria Fragal\`a}

\thanks{}

\address[Beniamin Bogosel]{
	Faculty of Exact Sciences, Aurel Vlaicu University of Arad, 2 Elena Dr\u agoi Street, Arad, Romania}
\email {beniamin.bogosel@uav.ro}

\address[Dorin Bucur]{
Universit\'e  Savoie Mont Blanc, Laboratoire de Math\'ematiques CNRS UMR 5127 \\
  Campus Scientifique \\
73376 Le-Bourget-Du-Lac (France)
}
\email{dorin.bucur@univ-savoie.fr}

\address[Ilaria Fragal\`a]{
Dipartimento di Matematica \\ Politecnico  di Milano \\
Piazza Leonardo da Vinci, 32 \\
20133 Milano (Italy)
}
\email{ilaria.fragala@polimi.it}

\keywords{Torsional rigidity, P\'olya--Szeg\"o conjecture, polygonal shape
optimization, mixed torsion, regular polygons}
\subjclass[2020]{49Q10,  35J25, 52A40.}
\date{\today}


\begin{abstract}  
We prove the local version of  the 
P\'olya--Szeg\"o conjecture 
for the torsional rigidity of polygons: 
 for every \(n\geq5\), the regular \(n\)-gon is a strict local maximizer of torsional rigidity among convex \(n\)-gons of prescribed area.   Our proof is entirely analytic. It builds on a locally stable proportional triangular covering inspired by Solynin and Zalgaller and an associated weighted Voronoi-type partition, together with a quantitative   asymptotic analysis of the   loss    produced by truncating the  overlapping triangles to the partition cells. The result is then obtained  by establishing two key ingredients: the optimality of isosceles triangles for mixed torsional rigidity at fixed area and vertex angle, and a strict concavity property of the mixed torsional rigidity of isosceles triangles.   
 \end{abstract} 

\maketitle

\section{Introduction and statement of the result}
\label{s:main}

A classical question in shape optimization is whether symmetry of optimal
domains persists in the discrete setting: for several isoperimetric-type
problems in which balls are optimal, the corresponding question among polygons
with a prescribed number of sides is whether the regular $n$-gon is optimal.

While for the perimeter the regular $n$-gon is classically known to be optimal
among polygons with a prescribed number of sides and fixed area
(see, e.g., \cite{BuZa}), the analogous statement for other fundamental shape
functionals, including the first Dirichlet eigenvalue, the torsional rigidity,
and the logarithmic capacity, is a long-standing conjecture formulated by
P\'olya and  Szeg\"o more than fifty years ago \cite{PS}. They also observed that,
for $n=3$ and $4$, the result follows immediately from Steiner symmetrization,
whereas for $n\geq5$ this argument breaks down, since symmetrization may increase
the number of sides.

For logarithmic capacity, the conjecture was proved in 2004 by Solynin and
Zalgaller \cite{SZ}, by means of a geometric argument based on a suitable
triangular covering. Let us also mention that the analogous result for the
Cheeger constant was proved by the second and third authors in \cite{BF16},
using shape derivatives. For the first Dirichlet eigenvalue and the torsional
rigidity, the problem, in its full generality, is currently  open.

Let us recall that, given a convex polygon $D$, its torsional rigidity
$\T(D)$ is given by
$$
\T(D):=-\inf_{u\in H^1_0(D)}
\int_D\bigl(|\nabla u|^2-2u\bigr)\,dx
=\int_D u_D\,,
$$
where $u_D$ is the torsion function of $D$, namely the unique solution in
$H^1_0(D)$ of
$$
-\Delta u_D=1\qquad\text{in }D.
$$

The P\'olya--Szeg\"o conjecture asserts that, among $n$-gons of prescribed
area, the regular $n$-gon has maximal torsional rigidity. The purpose of the
present paper is to establish the conjecture locally around the regular polygon.

Before stating our result, we  briefly review the state of the art.
Let us mention that, in \cite{BFL12} and \cite{FrGaLa}, the regular polygon
was shown to optimize the torsional rigidity, respectively under a constraint
different from prescribed area, and under an area constraint within a
restricted class of convex polygons satisfying a quantitative asymmetry
condition.
A  step forward towards the solution of the P\'olya--Szeg\"o conjecture  was made in the works \cite{BoBu24,BoBu26} by the first and
second authors. They showed
that, for the first Dirichlet eigenvalue, for a given $n$ the full conjecture can be reduced to finitely many, albeit very demanding,    
certified numerical computations. By computing the shape Hessian matrix, they  established the local optimality of the
regular $n$-gon for $n=5,6$,  based on a rigorous  certification of the numerical approximations of its eigenvalues.   
The numerical approximation techniques used in \cite{BoBu26} restricted  this procedure to $n=5,6$.

In \cite{GHLZ26}, inspired by \cite{DGP26},
Gui, Hu, Li and Zhang proved that the torsional rigidity
of regular polygons with fixed area is strictly increasing
with the number of sides, 
a necessary condition for the validity
of the P\'olya--Szeg\"o conjecture.
Shortly afterwards, in \cite{GHL26}, Gui, Hu and Li,
using a triangular decomposition in the spirit of \cite{SZ},
proved that the regular polygon has maximal torsional rigidity
among tangential $n$-gons with prescribed area.
Finally, 
in \cite{Bogosel26},
the first author established strict local maximality
of regular $n$-gons for torsional rigidity
in the range $5\le n\le25$,
through a computer-assisted proof based on refined error estimates
for the Hessian approximation.

In contrast with the existing computer-assisted local results, our approach is entirely analytic and applies to every $n\geq 5$.  
 Our main result reads as follows:  

\begin{theorem}\label{t:main}
Let $n\geq5$. If $D^*$ is a regular $n$-gon, then every convex $n$-gon $D$ with
$|D|=|D^*|$ and vertices sufficiently close to those of $D^*$ satisfies  
$$
\T (D)\leq\T (D^*),
$$
with strict inequality unless $D$ is congruent to $D^*$.
\end{theorem}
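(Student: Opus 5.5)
The argument follows the strategy announced in the abstract: a triangular covering of $D$ in the spirit of Solynin--Zalgaller, a weighted Voronoi-type partition associated with it, and an upper bound for $\T(D)$ by a sum of mixed torsional rigidities of triangles.

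\emph{Step 1 (covering and partition).} Let $D$ be a convex $n$-gon close to $D^*$, with sides $S_1,\dots,S_n$. For each side $S_i$ we build a triangle $T_i$ with base $S_i$ and apex at a point $O_i$ near the centre of $D^*$. The apexes are chosen proportionally, in the sense of Solynin--Zalgaller, so that the areas $|T_i|$ and the apex angles $\alpha_i$ depend smoothly on the vertices and satisfy $\sum_i \alpha_i = 2\pi$ together with a normalization such as $\sum_i |T_i| = |D|$ up to a controlled error. For $D=D^*$ all the $O_i$ coincide with the centre, and the construction is locally stable. The triangles $T_i$ may overlap, so we also introduce a weighted Voronoi-type partition $D=\bigcup_i C_i$, where each cell $C_i$ contains the side $S_i$ and is close to $T_i$.

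\emph{Step 2 (upper bound by mixed torsions).} Using $u_D$ as a competitor, we write
\[
\T(D)=\sum_i \int_{C_i}\bigl(2u_D-|\nabla u_D|^2\bigr).
\]
On each cell the test function vanishes on $S_i$ but not on the interior boundary. This suggests the mixed torsion problem on a triangle: Dirichlet condition on the base and Neumann condition on the two lateral sides. Call its value $\T_m(T)$. Truncating the overlapping triangles to the cells produces a loss. The core estimate is an asymptotic expansion of this loss showing
\[
\T(D)\le \sum_i \T_m(T_i)+R(D),
\]
where $R(D)$ is of higher order than the quadratic deviation of $D$ from $D^*$ (or has a favourable sign), and equality holds at $D^*$.

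\emph{Step 3 (two key ingredients).} First, at fixed area $a$ and fixed apex angle $\alpha$, the isosceles triangle maximizes $\T_m$. This is proved through symmetrization or reflection with respect to the Neumann sides: reflecting across the lateral sides turns the problem into a Dirichlet problem on a sector-like domain. It gives $\T_m(T_i)\le F(a_i,\alpha_i)$, where $F$ is the mixed torsion of the isosceles triangle. Second, by scaling, $F(a,\alpha)=a^2 f(\alpha)$. We show that $F$ is strictly concave in the relevant sense near $(|D|/n,2\pi/n)$; concretely, $\alpha\mapsto f(\alpha)$ has the concavity needed so that, under the constraints $\sum a_i=|D|$ and $\sum\alpha_i=2\pi$, the sum $\sum_i F(a_i,\alpha_i)$ is strictly maximized at equal data. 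Since $F$ is $2$-homogeneous in $a$, we use the joint concavity of $(a,\alpha)\mapsto F$ after a change of variables, for example $\sqrt{F}$ being $1$-homogeneous, and apply Jensen's inequality. By Steps 2--3,
\[
\T(D)\le nF\Bigl(\frac{|D|}{n},\frac{2\pi}{n}\Bigr)=\T(D^*),
\]
since at $D^*$ the mixed torsion of each triangle equals its share of the torsion, by symmetry. Strictness follows from strict concavity together with equality in the isosceles step, which forces all $T_i$ to be congruent isosceles triangles and hence $D$ to be regular.

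\emph{Main obstacle.} The hardest part is Step 2. We must show that the truncation loss is controlled quadratically and is dominated by the strict concavity gain from Step 3. The concavity of $f$ must also be verified analytically for all $n\ge5$. For this I would first compute $f$ via a series expansion of the mixed torsion function on a sector-like triangle in polar coordinates, then show that $f''$ has the required sign, with margins uniform in $n$.
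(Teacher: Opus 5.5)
Your outline has the right architecture: a covering, a partition, releasing interfaces, then isosceles optimality and concavity. But there are two genuine gaps, and they are linked.

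\textbf{The Jensen step in Step 3 fails as formulated.} For fixed $\alpha$, the map $a\mapsto F(a,\alpha)=a^2f(\alpha)$ is strictly convex, so $F$ cannot be jointly concave. Under $\sum_i a_i=\mathrm{const}$ and $\sum_i\alpha_i=2\pi$, the sum $\sum_i a_i^2f(\alpha_i)$ is \emph{not} maximized at equal data: with all $\alpha_i=2\pi/n$, any unequal choice of the $a_i$ increases it. Passing to $\sqrt F$ does not help, because Jensen would bound $\sum_i\sqrt{F(a_i,\alpha_i)}$, which gives no sharp upper bound on $\sum_iF(a_i,\alpha_i)$.

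The missing idea is to actually use the Solynin--Zalgaller proportionality $\alpha_i/|T_i|=R$, which you mention but never exploit. Together with $\sum_i\alpha_i=2\pi$ it forces $|T_i|=\alpha_iS/(2\pi)$, where $S:=\sum_i|T_i|$. Hence $\sum_i|T_i|^2f(\alpha_i)=\frac{S^2}{4\pi^2}\sum_i\alpha_i^2f(\alpha_i)$. What must be proved is strict concavity of the single-variable function $\alpha\mapsto\alpha^2f(\alpha)=4\frac{\alpha^2}{\sin^2\alpha}\mathcal T(\alpha)$, where $\mathcal T(\alpha)$ is the mixed torsion of the isosceles triangle with legs $1$ and apex angle $\alpha$. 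Coupling area to angle is exactly why a \emph{proportional} covering is needed.

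\textbf{The chain still does not close after this fix, and your diagnosis of the main obstacle points the wrong way.} The overlap excess $\Delta=S-|D|$ is not of higher order. The apices move by $O(\varepsilon)$, so $\Delta$ is in general of order $\varepsilon^2$, the same order as the concavity gain. If you only know $\T(D)\le\sum_i\T_m(T_i)$, the argument above yields
\[
\T(D)\le (S/|D|)^2\,\T(D^*)\approx(1+2\Delta/|D|)\,\T(D^*).
\]
Nothing in your argument compares $\Delta$ with $\kappa\sum_i(\alpha_i-2\pi/n)^2$, and the paper never needs such a comparison.

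Instead, the truncation is the \emph{favourable} term, and it must be quantified to leading order:
\begin{itemize}
\item Take cells $Q_i\subseteq T_i$ (genuine inclusion, not mere closeness) that are convex and contain the base.
\item One has the exact identity $\T_m(T_i)-\T_m(Q_i)=\int_{R_i}w_i+\int_{\Sigma_i}z_i\,\partial_\nu w_i$.
\item Here $\partial_\nu w_i\ge0$ on $\Sigma_i$, by monotonicity of $w_i$ towards the Neumann sides of an acute triangle, and $\int_{\Sigma_i}\partial_\nu w_i=|R_i|$.
\item Uniform estimates and compactness then give a gain of $(2w^*(0)+o(1))|R_i|$ per cell.
\end{itemize}
Because $w^*$ attains its strict maximum at the apex, $w^*(0)>\frac{1}{|T^*|}\int_{T^*}w^*$. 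This is precisely what beats the excess and yields $\T(D)\le(|D|/S)^2\sum_i\T_m(T_i)$.

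\textbf{The isosceles step is also only sketched.} Reflecting across the Neumann sides to obtain a Dirichlet problem on a sector is not a workable argument as stated. The paper proves existence of a maximizer and shows that non-isosceles triangles are not critical under rotation of the Dirichlet side about its midpoint. It does this by reflecting across the perpendicular bisector of the base, using a sign property of the angular derivative of the mixed torsion function that rests on the Li--Yao monotonicity and requires $\alpha\le\pi/2$.
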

 
 Our proof starts from the triangular method introduced by Solynin and Zalgaller in \cite{SZ} and recently revisited in \cite{GHL26}.  Roughly speaking, the underlying idea is the following. Assume that a polygon can be partitioned into a family of triangles, each having one side of the polygon as its base and area proportional to the interior angle opposite to that base. The torsional rigidity of the polygon can then be bounded from above by the sum of the {\it mixed} torsional rigidities of these triangles, where {\it mixed} means that homogeneous Dirichlet boundary conditions are imposed only on the base, while homogeneous Neumann boundary conditions are prescribed on the remaining part of the boundary. The problem is thus reduced to optimizing the collective behaviour of the mixed torsional rigidities of these triangles, under a constraint on the sum of their  apex angles.

In general, however, such a triangular partition does not exist. In some notable cases, such as the class of  tangential polygons,  the problem can be approached  by using, in place of a proportional partition, 
  some natural partition adapted to the geometry of the domain. For arbitrary polygons,  the strategy adopted by Solynin and Zalgaller is  to construct a suitable family of triangles satisfying the required proportionality condition, but forming a covering rather than a partition, so that overlaps between different triangles may occur. When passing from logarithmic capacity, which involves an exterior boundary value problem, to torsional rigidity, which instead involves an interior one,  the analysis becomes considerably more delicate, as evidenced  by the fact that the problem remains open more than twenty years after \cite{SZ}. The difficulty is twofold: on the one hand, the overlaps among the triangles of the covering constitute a substantial obstacle to comparing the torsional rigidity of the polygon with the mixed torsional rigidities of the individual triangles; on the other hand, optimizing the mixed torsional rigidity of a triangle under constraints on its area and on the angle at the vertex opposite to the Dirichlet side is itself a challenging problem.  
To overcome these difficulties, we combine several ingredients:
\begin{itemize}[topsep=6pt,partopsep=0pt,itemsep=5pt] 
\item{} 
We prove that, for polygons sufficiently close to the regular one, the proportional triangular covering is uniquely determined and depends continuously on the vertices, and we associate with it a suitable weighted Voronoi-type partition of the polygon whose cells are contained in the corresponding covering triangles  (see Section \ref{s:local-covering}). 
\item{}  We establish a truncation estimate for the mixed torsional rigidity,
quantifying the error produced by replacing the overlapping covering triangles by the cells of the polygonal partition,
together with an asymptotic analysis of such error near the regular polygon  (see Section \ref{s:truncation}). 
 During this process,  we rely crucially on  some information about the geometry of the level sets of the mixed torsion function recently established by the second and third author in 
\cite{BF26}.

\item{}  We prove the optimality of the isosceles triangle for the mixed torsional rigidity under fixed area and vertex angle, by means of a reflection argument reminiscent of the one used in \cite{FV19} (see Section \ref{s:isosceles}).
 Here we use  as a key tool a monotonicity property of the mixed torsion function proved by Li and Yao in  \cite{LY}.   
  
\item{} We obtain a concavity property for a suitable rescaling of the mixed torsional rigidity of isosceles triangles (see Section \ref{s:local-concavity}), close in spirit to \cite[Theorem 3.1]{GHL26}: although we do not invoke the spectral representation of the torsion function employed there, relying instead on a direct variational argument, the paper \cite{GHL26} gave us a decisive impulse to pursue and develop our earlier attempts in this direction.
\end{itemize} 
 
 The proof is then completed by combining all the above ingredients (see Section~\ref{s:combining}):  using the polygonal partition and
the quantitative truncation estimate, together with the  asymptotic analysis of the shrinking truncated regions 
near the regular polygon,
we reduce the problem to an estimate
involving the mixed torsional rigidities of the covering triangles; next, 
the optimality of the isosceles triangle allows us to replace each
of these terms by the corresponding isosceles one; finally, the concavity
property yields the required bound on their sum, using the fact that the
vertex angles of the triangular covering add up to $2\pi$.

For the reader's convenience, in Section \ref{sec:proof} we present
the summary of the proof  of Theorem \ref{t:main}, postponing the proofs of all the intermediate results  to the
subsequent sections. 

\section{Summary of the proof of Theorem \ref{t:main}}\label{sec:proof} 

\subsection{The proportional triangular covering and the associated polygonal partition}\label{s:local-covering}

Let $D$ be a convex $n$-gon having vertices
$A_1,\ldots,A_n$,  enumerated in the counterclockwise direction. Below, all indices are understood modulo $n$; in particular,
$A_{n+1}=A_1$ and $A_0=A_n$, and similarly for
$T_i$, $q_i$, $\ell_i$, $x_i$, and $y_i$.

We recall the following  
definitions from the seminal paper \cite{SZ}: 

\begin{definition} \label{d:system} 
A family of triangles
$\{T_i\}_{i=1}^n$ is called an {\it admissible system} for $D$ if, for every 
$i=1,\ldots,n$, 
$T_i \cap D \neq \emptyset$, $T_i$ has base
$[A_i,A_{i+1}]$, and $T_i$ and $T_{i+1}$ share a boundary
segment which is an entire side of at least one of these
triangles.

An admissible system $\{T_i\}_{i=1}^n$
is called {\it proportional} if, denoting by  $\alpha_i$  the angle of $T_i$ opposite to the base
$[A_i,A_{i+1}]$,   the ratio
$R:=\frac{\alpha_i}{| T_i|}$
is independent of $i$. \end{definition} 
 
 \begin{definition}\label{d:weights}   Let $\{T_i\}_{i=1}^n$ be an admissible proportional system according to the above definition. For every $i = 1, \dots, n$, 
 denote by $\ell _i$ the ray emanating from $A_i$ such that  the triangle $T_i$ with base $[A_i,A_{i+1}]$  has sides on $\ell _i$ and $\ell _{i +1}$.  On these rays, we 
define a family of positive weights $\{p_i\}_{i=1}^n$ 
as follows.  
 We take $p_0 = 1$, and then for every $i = 1, \ldots, n$ we set 
\begin{equation}\label{f:recursive} 
p_i
= p_{i-1}\frac{d(z,q_{i-1})}{d(z,q_i)}  \qquad \forall z \in \ell _i \setminus  \{A_i \}  \,,
\end{equation} 
where  $d(\cdot ,q_i)$ denotes the distance from the straight line $q_i$
containing the side
$[A_i,A_{i+1}]$ (noting that the quotient on the right-hand side is independent of the
choice of $z$ in  $ \ell _i \setminus \{A_i \}$). 

  \end{definition} 
  
It is shown in \cite[Lemma 6 and its proof]{SZ}  that: 
  \begin{itemize}
  \item[(a)]  There exists an admissible proportional system which covers $D$, i.e., such that 
  \begin{equation}\label{f:covering} 
  \bigcup_{i = 1} ^n T _ i \supseteq D\,.
  \end{equation} 
  \item[(b)] An admissible proportional system covers $D$ provided the   last weight in Definition \ref{d:weights} satisfies 
  \begin{equation}\label{f:pN} 
  p _n = 1  \,. 
  \end{equation}  
  
  \end{itemize}

The approach used in \cite{SZ} to obtain the above results is global, and can also be extended to non--convex polygons. 
However, it does not imply the uniqueness of the admissible proportional system satisfying \eqref{f:covering}, nor its stability under perturbations of the polygon. Since 
such stability is crucial for our purposes, and since we work only near the regular polygon, we give a different proof of statement (a) above, which enables us to complement it
with uniqueness of the admissible proportional covering,  and continuous dependence on the vertices of the polygon. Our proof still exploits statement (b), but relies on a local argument,
namely on the implicit function theorem. A further novelty is that we derive from the proportional covering  
a partition of $D$ into weighted Voronoi-type cells,  which will play a crucial role in estimating its torsional rigidity. 

\smallskip

After a rigid motion and a scaling,  we may work with the regular $n$-gon $D^*$ 
whose vertex tuple   is 
 $a^*=(A_1^*,\ldots,A_{n}^*)$, where 
$$
A_i^*= \big (\cos \big (\frac{2\pi i }{n} \big  ),\ \sin \big (\frac{2\pi i}{n}  \big ) \big ), \qquad i=1,\ldots,n\,.
$$ 
 \smallskip
We prove (see Section \ref{s:proof-cover}):  
 
\begin{theorem}\label{t:cover} Let $n \geq 5$. 
For  any convex polygon $D$ whose vertex tuple $a=(A_1,\ldots,A_{n})$ is sufficiently close
to $a^*=(A_1^*,\ldots,A_{n}^*)$, 
there exists a unique admissible proportional system  $\{ T _i \} _{ i = 1 } ^n$, sufficiently close to the fundamental system of $D^*$,
such that the corresponding system of weights  $\{ p _ i \} _{i = 1} ^ n$  introduced in Definition \ref{d:weights} satisfies $p _n =1$, and hence $\cup _{ i = 1 } ^n T_i \supseteq D$. 
 
 Moreover, as $a \to a ^*$, this covering converges to the fundamental covering of $D ^*$ by isosceles triangles, namely, for every $i = 1, \dots, n$, the apex of $T_i$ converges to $0$. 
 
 Finally,  the Voronoi-type cells  
 \begin{equation}\label{f:cover-cells}
 Q_i:=\{x\in D:p_i d(x,q_i)\le p_j d(x,q_j)\ \forall j\}
\end{equation}
      are convex polygons satisfying  $Q_i\subseteq T_i$, containing the whole base
      $[A_i,A_{i+1}]$ of $T_i$, and partitioning $D$. 
\end{theorem}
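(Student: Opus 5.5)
We parametrize admissible systems near the fundamental one and solve the condition $p_n=1$ together with proportionality via the implicit function theorem. A system close to the fundamental system of $D^*$ is determined by the directions of the rays $\ell_1,\dots,\ell_n$. Write $\theta_i$ for the angle that $\ell_i$ makes with the side $q_{i-1}$ at $A_i$. At $a^*$, each $\ell_i$ points from $A_i^*$ to the center $0$, and the fundamental triangles $T_i^*=[0,A_i^*,A_{i+1}^*]$ all have apex angle $2\pi/n$ and equal area. Given $(\theta_1,\dots,\theta_n)$ near $\theta^*$, the triangle $T_i$ is bounded by $q_i$, $\ell_i$ and $\ell_{i+1}$. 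Its apex angle $\alpha_i$, its area $|T_i|$, and the weights $p_i$ from \eqref{f:recursive} are smooth (indeed real-analytic) functions of $(a,\theta)$. Here $p_i/p_{i-1}=\sin\theta_i/\sin(\beta_i-\theta_i)$, where $\beta_i$ is the interior angle of $D$ at $A_i$. Admissibility near $a^*$ is automatic: consecutive triangles share the segment of $\ell_{i+1}$ from $A_{i+1}$ to the nearer apex.

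We then consider the map $F:(a,\theta,R)\mapsto\bigl(\alpha_i-R|T_i|\bigr)_{i=1}^n\times\bigl(\log p_n\bigr)$. This gives $n+1$ equations in the $n+1$ unknowns $(\theta,R)$, and $F(a^*,\theta^*,R^*)=0$ with $R^*=(2\pi/n)/|T_1^*|$. The key step is invertibility of $D_{(\theta,R)}F$ at the regular point. By rotational symmetry this Jacobian is block-circulant, so we diagonalize it with discrete Fourier modes $\theta_i-\theta^*=\varepsilon\,\omega^{ki}$, $\omega=e^{2\pi\mathrm i/n}$. Since $\log p_n=\sum_i\log\bigl(\sin\theta_i/\sin(\beta_i-\theta_i)\bigr)$, its linearization is a nonzero multiple of $\sum_i\delta\theta_i$, which sees only the mode $k=0$. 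The proportionality equations are linearized explicitly. Moving the two rays bounding $T_i$ changes $\alpha_i$ by $\delta\theta_i+\delta\theta_{i+1}$ (up to sign conventions) and changes $|T_i|$ by $c_1\delta\theta_i+c_2\delta\theta_{i+1}$, with $c_1=c_2$ by symmetry. For $k\neq0$ the symbol is therefore $(1-R^*c)(1+\omega^k)$ times a nonzero factor, plus at most a term coming from $R$, which affects only $k=0$. The mode $k=n/2$ needs separate care when $n$ is even, since then $1+\omega^k=0$. I expect this to be the main obstacle. One has to compute the symbol exactly, with the correct orientation of the angle increments: in fact $\delta\alpha_i=\delta\theta_i'+\delta\theta_{i+1}$, where $\theta'$ is measured on the other side. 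The claim to verify is that the symbol $\sigma(k)$ never vanishes for $k\neq0$. For $k=0$ the block couples $\sum\delta\theta_i$ with $\delta R$, and $\partial_\theta\log p_n\neq0$ makes it invertible. The restriction $n\ge5$ presumably enters precisely in the nonvanishing of $\sigma(k)$, through the explicit trigonometric quantities $1-R^*c$ evaluated at apex angle $2\pi/n$. The implicit function theorem then gives existence, local uniqueness and continuous dependence of $(\theta,R)$ on $a$, and at $a=a^*$ the solution is $\theta^*$, so the apexes converge to $0$. Covering follows from statement (b).

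For the cells, $Q_i$ is the intersection of $D$ with finitely many half-planes $\{p_id(x,q_i)\le p_jd(x,q_j)\}$. Each of these is a half-plane inside $D$, because the distances $d(\cdot,q_j)$ are affine on $D$. Hence $Q_i$ is a convex polygon, and the $Q_i$ cover $D$ with pairwise boundary-only overlaps. On $[A_i,A_{i+1}]$ we have $d(x,q_i)=0$, so this base lies in $Q_i$. By \eqref{f:recursive}, the bisector between $Q_{i-1}$ and $Q_i$ is exactly the line containing $\ell_i$. Near $a^*$ the cells are small perturbations of the regular sectors, so $Q_i$ is bounded by $q_i$ and pieces of $\ell_i$, $\ell_{i+1}$ near the apex. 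Continuity shows that non-adjacent constraints become active only in a small neighbourhood of the center, and these only cut $Q_i$ further. Therefore $Q_i\subseteq\{p_id(\cdot,q_i)\le p_{i\pm1}d(\cdot,q_{i\pm1})\}\cap D$, which is the wedge between $\ell_i$ and $\ell_{i+1}$ on the side of $q_i$, intersected with $D$. This set is contained in $T_i$, using that the two constraints meet exactly at the apex of $T_i$.
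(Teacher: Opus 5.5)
Your overall strategy is the paper's: the implicit function theorem in the base angles and $R$, with $p_n=1$ rewritten as $\prod_i\sin\theta_i=\prod_i\sin(\beta_i-\theta_i)$, and covering from (b). Your treatment of the cells is fine. In fact $Q_i\subseteq\{\varphi_i\le\varphi_{i-1}\}\cap\{\varphi_i\le\varphi_{i+1}\}\cap D$ holds directly by definition, so the continuity remarks about non-adjacent constraints are unnecessary.

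The gap is the step that carries existence and uniqueness: you never prove that $D_{(\theta,R)}F$ is invertible at the regular point. You leave the nonvanishing of $\sigma(k)$ for $k\neq0$ as ``the claim to verify''. The symbol you do write, $(1-R^*c)(1+\omega^k)$, is incorrect and would vanish at $k=n/2$ for even $n$, so as stated the argument breaks down. The error comes from the claim $c_1=c_2$ ``by symmetry''. The reflection exchanging $A_i$ and $A_{i+1}$ maps $\theta_{i+1}$ to $\beta_i-\theta_i$, not to $\theta_i$, so in your variables the two coefficients have opposite signs.

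Here is the correct computation. The base angles of $T_i$ are $x_i=\beta_i-\theta_i$ and $y_i=\theta_{i+1}$. Hence $\alpha_i=\pi-\beta_i+\theta_i-\theta_{i+1}$ and $\delta\alpha_i=\delta\theta_i-\delta\theta_{i+1}$. Moreover $\partial_{x_i}|T_i|=\frac{s_i^2\sin^2 y_i}{2\sin^2(x_i+y_i)}=\tfrac12|A_iV_i|^2$, where $V_i$ is the apex, and the same holds symmetrically in $y_i$. At the regular point this equals $c=\tfrac12$. Therefore
\[
\delta\bigl(\alpha_i-R|T_i|\bigr)=(1+R^*c)\,(\delta\theta_i-\delta\theta_{i+1})-|T^*|\,\delta R .
\]
So the symbol for $k\neq0$ is $(1+R^*c)(1-\omega^k)\neq0$, including at $k=n/2$. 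The $k=0$ block is anti-diagonal, with nonzero entries coming from $-|T^*|\,\delta R$ and from $\partial_{\theta_i}\log p_n=2\cot x^*=2\tan(\pi/n)$.

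This is exactly the content of the paper's determinant $-2n^2b_nc_n^{n-1}d_n$. There $c_n=-2\sin(2\pi/n)(1+R^*c)$ is, up to the factor $-2\sin(2\pi/n)$, the coefficient in your corrected symbol. Contrary to your guess, $n\ge5$ plays no role in this nondegeneracy, which holds for every $n\ge3$. That restriction is needed elsewhere, e.g.\ for the acuteness of the $T_i$ in the truncation estimate.
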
 

\medskip 

\subsection{Mixed torsional rigidity under convex truncation and error asymptotics close to the regular polygon}\label{s:truncation} 

Given a polygon $P$ and a fixed side $\Gamma $ of $P$, 
we define its {\it mixed torsional rigidity} (with Dirichlet side $\Gamma$) by 
\begin{equation}\label{f:tmixvar}
\T_{\rm mix}(P):= - \!\!\!\inf_{u\in H^1_\Gamma(P)}\int_P\big(|\nabla u|^2-2u\big)\,dx
=\sup_{u\in H^1_\Gamma(P)\setminus\{0\}}
\frac{\left(\displaystyle\int_P u\,dx\right)^2}{\displaystyle\int_P|\nabla u|^2\,dx}\,, 
\end{equation} 
where $H^1_\Gamma(P)$ is the space of functions in $H^1(P)$ with zero trace on $\Gamma$. 
Equivalently, 
\begin{equation}\label{f:tmix}
\T_{\rm mix}(P):=\int_P |\nabla w_P|^2=\int_P  w_P\,,
\end{equation}
where $w _P$ is the 
{\it mixed torsion function} of $P$, namely the unique solution to the boundary value problem
\begin{equation}\label{f:mixedtorsion}
\begin{cases} - \Delta w = 1 & \text{ in  } P  \\
w =0 & \text{ on }\Gamma,\\
\partial_\nu w =0 & \text{ on }\partial P \setminus \Gamma \,.
\end{cases}
\end{equation}

 In order to compare the mixed torsional rigidity (with Dirichlet side $[A_i, A_{i+1}]$) of each of the 
 covering triangles $T_i$ given by Theorem \ref{t:cover} with that of the corresponding polygonal
cell $Q_i \subseteq T_i$ given by  \eqref{f:cover-cells}, we shall prove   the following result:

 \begin{theorem}
\label{t:truncation}
Let $T$ be an acute triangle with base $\Gamma$, and let
$Q\subsetneq T$ be a convex polygon containing the whole
segment $\Gamma$ in its boundary.
Denote by $w$ and $z$ the mixed torsion functions of $T$
and $Q$, respectively, with homogeneous Dirichlet condition
on $\Gamma$ and homogeneous Neumann condition on the
remaining boundary. Set
$$
R:=T\setminus Q,
\qquad
\Sigma:=\partial Q\cap\operatorname{int}T.
$$
Then
\begin{equation}\label{r:truncation-bounds}
\T_{\rm mix}(Q) \leq \T_{\rm mix}(T) -\int_R w\,dx -|R|\inf_\Sigma z \,. \end{equation}
\end{theorem}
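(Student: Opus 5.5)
The plan is to compare the two mixed torsion functions through Green's identities on $Q$. This reduces \eqref{r:truncation-bounds} to a sign condition on the normal derivative of $w$ along the cut $\Sigma$.

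\emph{Step 1: integrate by parts on $Q$.} Since $T_{\rm mix}(T)=\int_T w=\int_Q w+\int_R w$, the claim is equivalent to
$$\int_Q w-\int_Q z\ \ge\ |R|\inf_\Sigma z .$$
Let $\nu$ be the outer unit normal to $Q$. Then $\partial Q$ splits into three parts: $\Gamma$, the pieces of $\partial T\setminus\Gamma$ still lying on $\partial Q$, and $\Sigma$.

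First test $-\Delta z=1$ against $w$ on $Q$. We have $w=0$ on $\Gamma$ and $\partial_\nu z=0$ on $\partial Q\setminus\Gamma$, so
$$\int_Q w=\int_Q\nabla w\cdot\nabla z .$$
Next test $-\Delta w=1$ against $z$ on $Q$. Here $z=0$ on $\Gamma$, $\partial_\nu w=0$ on $(\partial T\setminus\Gamma)\cap\partial Q$, and $\Sigma$ is interior to $T$, where $w$ is smooth. This gives
$$\int_Q z=\int_Q\nabla z\cdot\nabla w-\int_\Sigma z\,\partial_\nu w .$$
Subtracting the two identities,
$$\int_Q w-\int_Q z=\int_\Sigma z\,\partial_\nu w\,d\mathcal H^1 .$$
These manipulations need justification near the corners of $Q$ and $T$, where $w,z$ are only $H^1$ with mild singularities. $T$ is acute and $Q$ is convex, so all corner angles are at most $\pi$, and $w,z\in H^{2}$ or at least $W^{1,\infty}$ near corners. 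Alternatively I will cut off small balls around the corners and let their radius go to zero.

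\emph{Step 2: compute the flux through $\Sigma$.} Integrate $-\Delta w=1$ over $R$. Because $\Gamma\subset\partial Q$, we have $\partial R\subset\Sigma\cup(\partial T\setminus\Gamma)$ up to finitely many points, and the Neumann condition kills the second part. The outer normal of $R$ on $\Sigma$ is $-\nu$, hence
$$\int_\Sigma\partial_\nu w=|R| .$$
Therefore, as soon as $\partial_\nu w\ge0$ on $\Sigma$,
$$\int_\Sigma z\,\partial_\nu w\ \ge\ \inf_\Sigma z\int_\Sigma\partial_\nu w=|R|\inf_\Sigma z ,$$
and the theorem follows.

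\emph{Step 3 (main obstacle): prove $\partial_\nu w\ge 0$ on $\Sigma$.} This is exactly where acuteness of $T$ and convexity of $Q$ enter. I would use the geometry of the level sets of the mixed torsion function from \cite{BF26}, together with the monotonicity of Li and Yao \cite{LY}. The aim is to show that $\nabla w(x)$ points into a cone of directions $C$ "away from $\Gamma$". Concretely, $C$ should be the cone of directions $e$ for which the half-line $x+te$, $t>0$, moves away from the base inside $T$. Equivalently, the sublevel sets $\{w<t\}$ should be convex sets containing $\Gamma$.

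Given this, take $x\in\Sigma$. By convexity of $Q$ and $\Gamma\subset\partial Q$, the supporting line of $Q$ at $x$ separates $x$'s outward side from $\Gamma$. So $\nu(x)$ lies in the dual cone of directions along which one leaves $\Gamma$, which gives $\nu\cdot\nabla w\ge0$.

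If the available level-set result only gives convexity of the sublevel sets, I would argue as follows. Fix $x\in\Sigma$ and set $t=w(x)$. The set $\{w\le t\}$ is convex, contains $\Gamma$, and has $x$ on its boundary, with $\nabla w(x)$ as outer normal. Suppose $\nu(x)\cdot\nabla w(x)<0$. Then the supporting half-plane of $Q$ at $x$ would have to cut $\{w\le t\}$ transversally. I would then try to produce a point of $\Gamma$ outside $Q$, contradicting $\Gamma\subset\partial Q$.

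Making this comparison between the two convex sets rigorous, including the degenerate case $\nabla w(x)=0$ and points of $\Sigma$ near $\partial T$, is the delicate point of the whole argument.
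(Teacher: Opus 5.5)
Your Steps 1 and 2 are exactly the paper's argument: the identity $\int_Q w-\int_Q z=\int_\Sigma z\,\partial_\nu w$ and the flux identity $\int_\Sigma\partial_\nu w=|R|$. Your regularity remarks are adequate, since all (DN)-angles of $Q$ are acute. The gap is Step 3, and the mechanism you propose there does not work.

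First, the sublevel sets $\{w\le t\}$ are \emph{not} convex. Near the (NN)-vertex $V$, with apex angle $\alpha<\pi/2$, one has $w(x)=w(V)-\tfrac14|x-V|^2+O(|x-V|^{\pi/\alpha})$ with $\pi/\alpha>2$. So the level curves near $V$ are close to circular arcs centred at $V$, and the chord joining their endpoints on the two Neumann sides enters $\{w>t\}$.

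Second, even if $\{w\le t\}$ were convex, this would only give $\nabla w(x)\cdot(x-A)\ge0$ and $\nabla w(x)\cdot(x-B)\ge0$. That places $\nabla w(x)$ in the same cone $\{g:\ g\cdot(x-A)\ge0,\ g\cdot(x-B)\ge0\}$ that already contains $\nu(x)$. This cone has aperture $\pi-\angle AxB$, which exceeds $\pi/2$ for $x$ near an apex of angle less than $\pi/2$, precisely the regime $\alpha\approx 2\pi/n$ of interest. So no sign follows, and no point of $\Gamma$ is forced out of $Q$.

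Third, the Li--Yao monotonicity alone only puts $\nabla w$ in the half-plane $\{g\cdot n_\Gamma\ge0\}$. This is too weak, because $\nu$ can be any direction in the open cone generated by the outward normals $\eta',\eta''$ of the Neumann sides: cut off a small triangle near $V$ by a line with such a normal.

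What is missing is the correct cone for $\nabla w$. You need $\nabla w\cdot\eta'\ge0$ and $\nabla w\cdot\eta''\ge0$ throughout $T$, i.e.\ $w$ is nondecreasing along every segment orthogonal to a Neumann side, oriented towards that side. This is the level-set information of \cite[Corollary~2(iii)]{BF26}, and it is exactly where the acuteness of $T$ enters.

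Convexity of $Q$ is then used only through an elementary remark. On each segment $L\subset\Sigma$, $\Gamma$ lies in the closed half-plane of $Q$, while $V$ lies strictly in the other one; otherwise $T$ would be contained in that half-plane, contradicting $L\subset\operatorname{int}T$. Hence $\nu\cdot(A-V)<0$ and $\nu\cdot(B-V)<0$. This means $\nu=\lambda'\eta'+\lambda''\eta''$ with $\lambda',\lambda''\ge0$, and therefore $\partial_\nu w=\lambda'\,\nabla w\cdot\eta'+\lambda''\,\nabla w\cdot\eta''\ge0$ on $\Sigma$. Since this is a pointwise statement plus a purely geometric decomposition of $\nu$, the degenerate cases you worry about ($\nabla w(x)=0$, points of $\Sigma$ near $\partial T$) never arise.
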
 

In turn, 
close to the regular polygon,  in order to control the error contribution arising from  the terms on 
 the right--hand side of \eqref{r:truncation-bounds}  
(evaluated at each  triangle of the covering), 
we prove 
 the following asymptotic estimate:

\begin{theorem}
\label{t:strategy-asymptotics} Let $n\geq5$. For every convex polygon $D$ whose vertex tuple
$a=(A_1,\ldots,A_n)$ is sufficiently close to
$a^*=(A_1^*,\ldots,A_n^*)$, let $\{T_i\}_{i=1}^n$ and
$\{Q_i\}_{i=1}^n$ be as in Theorem~\ref{t:cover}. For every
$i=1,\dots,n$, set
$$
R_i:=T_i\setminus Q_i,
\qquad
\Sigma_i:=\partial Q_i\cap\operatorname{int}T_i\,.
$$ 
Let $w_i$ and $z_i$ denote the mixed torsion functions of $T_i$
and $Q_i$, respectively, with Dirichlet condition on
$[A_i,A_{i+1}]$. Then, in the limit  
as $a\to a^*$, it holds
\begin{equation}\label{f:strategy-asymptotics}
\int_{R_i}w_i\,dx
=\bigl(w^*(0)+o(1)\bigr)|R_i|,
\qquad
\inf_{\Sigma_i}z_i=w^*(0)+o(1),
\end{equation}
where the second estimate is understood whenever
$\Sigma_i\ne\varnothing$, and $w^*$ is the mixed torsion
function of a fundamental triangle of $D^*$, with
Dirichlet condition on the side lying in $\partial D^*$.
\end{theorem}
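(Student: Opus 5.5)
The plan is to show that, as $a\to a^*$, the truncated regions $R_i$ and the interfaces $\Sigma_i$ shrink to the apex of the fundamental triangle. Both quantities in \eqref{f:strategy-asymptotics} can then be read off from continuity of the torsion functions at that point. The natural reference point is the apex $0$, since $w^*(0)$ is the value of the mixed torsion function there.

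\textbf{Step 1 (geometry).} By Theorem~\ref{t:cover}, the vertices of $T_i$ converge to those of the fundamental triangle $T_i^*$ with apex $0$. We claim that $R_i$ and $\Sigma_i$ are contained in a ball $B(0,\delta(a))$ with $\delta(a)\to0$.

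For the regular polygon all weights $p_j$ equal $1$. The cells $Q_j^*$ are then exactly the fundamental triangles, whose common boundaries are the bisector segments from $A_j^*$ to $0$. By continuity of the system and of the weights (both come from the implicit function theorem in Theorem~\ref{t:cover}), the lines $\ell_i$ and the weighted bisectors $\{p_i d(x,q_i)=p_j d(x,q_j)\}$ converge to the segments $[A_i^*,0]$, uniformly on compact sets.

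Away from a small ball around $0$, the cells $Q_j$ and the triangles $T_j$ therefore differ only in a thin strip around each ray. A transversality argument near the vertices $A_i$ shows that this strip is empty there, so $Q_i=T_i$ near the base. Concretely, the side $\ell_i$ of $T_i$ and the weighted bisector between $q_{i-1}$ and $q_i$ both pass through $A_i$. Definition~\ref{d:weights} with $z\in\ell_i$ gives $p_{i-1}d(z,q_{i-1})=p_i d(z,q_i)$ along $\ell_i$. Hence $\ell_i$ coincides with the weighted bisector line of $q_{i-1},q_i$ through $A_i$, and $\partial Q_i$ departs from $\partial T_i$ only where a non-adjacent constraint $j\neq i\pm1$ becomes active. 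Since non-adjacent lines $q_j$ are at positive distance from $T_i^*$ away from $0$, and all such constraints tie only at $0$ in the limit, non-adjacent constraints can only become active in a shrinking neighbourhood of the apex. This gives $R_i\cup\Sigma_i\subset B(0,\delta)$. I expect this localization to be the main technical obstacle; for $n\ge5$ the relevant inequalities are strict at $T_i^*\setminus\{0\}$, and compactness closes the argument.

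\textbf{Step 2 (first estimate).} The mixed torsion functions $w_i$ converge uniformly to $w^*$ (after identifying each $T_i$ with $T_i^*$ by an affine map close to the identity). This follows from stability of mixed problems on converging triangles: transport the variational problem \eqref{f:tmixvar} by the affine maps, then use $\Gamma$-convergence together with uniform H\"older estimates up to the boundary. The H\"older estimates hold because the angles stay bounded away from $\pi$ and the acute triangles avoid Dirichlet--Neumann junction singularities worse than $r^{1/2}$. Since $w^*$ is continuous at the apex and $R_i\subset B(0,\delta)$, we get $\sup_{R_i}|w_i-w^*(0)|=o(1)$, and integrating over $R_i$ gives the first estimate.

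\textbf{Step 3 (second estimate).} This is more delicate, because $Q_i$ is a perturbation of $T_i^*$ only in the Hausdorff sense. Its shape near the apex may have several extra small sides, but it is convex and contained in $T_i$. Convexity gives Mosco convergence of the spaces $H^1_{\Gamma}(Q_i)$ to $H^1_\Gamma(T_i^*)$, so $z_i\to w^*$ in $H^1$.

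To upgrade to pointwise control near $\Sigma_i\subset B(0,\delta)$, we need a uniform modulus of continuity for $z_i$ near the apex, which lies far from $\Gamma$. There the problem is a pure Neumann problem on convex corners. For this, extend $z_i$ by even reflection across the Neumann sides, or use uniform interior/Neumann Lipschitz bounds valid on convex domains: on convex domains, Neumann solutions of $-\Delta z=1$ have $\nabla z$ bounded in terms of the data. These bounds, together with the $H^1$ convergence, give $\sup_{Q_i\cap B(0,\delta)}|z_i-w^*(0)|\to0$, hence $\inf_{\Sigma_i}z_i=w^*(0)+o(1)$.

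If needed, the level-set geometry from \cite{BF26} can supply the lower bound $\inf_{\Sigma_i}z_i\ge w^*(0)-o(1)$ directly, since the infimum over $\Sigma_i$ is then attained at an extremal point. The upper bound is trivial from convergence at any point of $\Sigma_i$.
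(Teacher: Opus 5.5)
Your architecture matches the paper's. You localize $R_i,\Sigma_i$ at the apex, get uniform convergence of the affinely pulled-back $w_i$, then combine a uniform modulus of continuity for $z_i$ with identification of its limit as $w^*$, and transfer to the moving interface. For the $w_i$ step the paper uses $H^1$ convergence, a uniform $H^2$ bound on $T^*$ and compact embedding; your uniform H\"older bound serves the same purpose. Your Step~1 supplies something the paper only asserts. The constraints $j=i\pm1$ in \eqref{f:cover-cells} are exactly the half-planes bounded by the lines containing $\ell_i,\ell_{i+1}$, so $Q_i$ is $T_i$ cut only by non-adjacent constraints. These are inactive away from $0$ because $d(\cdot,q_j^*)-d(\cdot,q_i^*)>0$ on $\overline{T_i^*}\setminus\{0\}$: the symmetry axis of $D^*$ exchanging $q_i^*$ and $q_j^*$ meets $\overline{T_i^*}$ only at $0$. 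That is the fact to invoke, not the distance of $q_j^*$ from $T_i^*$.

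The gap is in Step~3, exactly where the paper does its real work: you need a modulus of continuity for $z_i$ near the apex that is uniform in $a$, and neither tool you suggest delivers it.
\begin{itemize}
\item Near the apex, $Q_i$ may have several short sides whose vertices coalesce as $a\to a^*$, so corner-by-corner regularity gives no uniform constants.
\item Even reflection across a Neumann side only helps at interior points of that side, because its endpoints remain boundary corners of the reflected set.
\item The convex-domain Lipschitz bound you allude to is a global estimate for a \emph{pure} Neumann problem with zero-mean data. Your $z_i$ solves a mixed problem with $-\Delta z_i=1$, which is incompatible with pure Neumann conditions, so the bound cannot be quoted as stated, and a local version near clustered corners is not available off the shelf.
\end{itemize}
The missing idea is odd reflection across the Dirichlet base $[A_i,A_{i+1}]$. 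By your Step~1, $Q_i=T_i$ near the base, so the angles of $Q_i$ at $A_i,A_{i+1}$ are the acute base angles of $T_i$, and $\widehat Q_i=Q_i\cup\mathcal R(Q_i)$ is convex. The odd extension $\widehat z_i$ then solves a pure Neumann problem on $\widehat Q_i$ with right-hand side $\pm1$, which has zero mean. Maz'ya's estimate, with a relative isoperimetric constant made uniform by a uniform cone condition, gives $\|\nabla\widehat z_i\|_{L^\infty}\le C$ independently of $a$. Since only a modulus of continuity is needed, a boundary De Giorgi--Nash--Moser estimate for Neumann problems on domains of uniform Lipschitz character would also work, but some such statement has to be supplied.

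Your closing remark that the upper bound is ``trivial from convergence at any point of $\Sigma_i$'' also depends on this uniform control, because the points of $\Sigma_i$ move with $a$. The appeal to \cite{BF26} there is unnecessary.
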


Theorems \ref{t:truncation} and  \ref{t:strategy-asymptotics}
are proved in Section~\ref{s:proof-strategy-lemmas}. 
 
 \medskip 
\subsection{Optimality of isosceles triangles for the mixed torsional rigidity}\label{s:isosceles}
 
Let $T \subset \R^2$ be a triangle. Let us fix a Dirichlet side  for $T$, that we denote by $\Gamma$. The vertex of $T$ opposite to $\Gamma$ will be called the (NN)-vertex. 

We consider the 
problem of maximizing the mixed torsional rigidity of $T$ (with Dirichlet side $\Gamma$) under an area constraint within the class 
 $\mathcal A_\alpha$ of triangles with a fixed interior angle $\alpha \in (0, \frac{\pi}{2}]$ 
 at the (NN)-vertex.  
  In terms of 
the scale--invariant energy 
\begin{equation}\label{f:energy}
\mathcal E_{mix}(T):=\frac{\T_{\rm mix}(T)^{\frac14}}{|T|^{\frac12}}\,,
\end{equation}
the problem becomes: 
\begin{equation}\label{f:sup}
\sup\big\{\mathcal E_{mix}(T):T\in\mathcal A_\alpha\big\}\,.
\end{equation}

We prove (see Section \ref{sec:isosceles}):

\begin{theorem}\label{t:tmax}
For every $\alpha\in(0,\frac{\pi}{2}]$, the supremum in \eqref{f:sup} is attained uniquely at isosceles triangles.
\end{theorem}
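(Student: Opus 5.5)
We prove the statement in two steps: first existence of a maximizer by compactness, then a reflection argument showing that any non-isosceles triangle can be strictly improved. Normalize $|T|=1$ and place the (NN)-vertex at the origin, with the two Neumann sides along rays at angles $0$ and $\alpha$. A triangle in $\mathcal A_\alpha$ of unit area is then determined by the lengths $a,b$ of its Neumann sides, subject to $ab\sin\alpha=2$. So the class is a one-parameter family, with the isosceles triangle at $a=b$.

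\textbf{Existence.} We use the variational characterization \eqref{f:tmixvar}. As $a\to\infty$ (so $b\to0$) the triangle degenerates into a thin needle. The Dirichlet side $\Gamma$ then becomes nearly aligned with the long Neumann side and is adjacent to a region of vanishing width. We expect $\T_{\rm mix}(T)\to0$ in this regime: every point is at distance $O(b)$ from $\Gamma$ along the short direction. This can be made rigorous by a Poincaré inequality on slices transversal to $\Gamma$, whose lengths are bounded by the width. Since $\T_{\rm mix}$ depends continuously on the triangle for nondegenerate triangles, the supremum in \eqref{f:sup} is attained on a compact range of $a$.

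\textbf{Strict improvement by reflection.} Let $T$ be non-isosceles, say $a>b$, and let $L$ be the bisector of the angle $\alpha$ at the (NN)-vertex $O$. Let $\tilde T$ be the isosceles triangle in $\mathcal A_\alpha$ of unit area, and let $w$ be the mixed torsion function of $T$. Reflection across $L$ exchanges the two Neumann rays. The plan is to build a competitor on $\tilde T$, in the spirit of \cite{FV19}. We split $T$ by $L$ into two subtriangles $T^+$ (on the side of the longer edge $a$) and $T^-$. The reflected copy of $T^-$ together with $T^+$ overlaps the region near $O$, and $\tilde T$ is obtained by cutting and pasting at fixed area. The candidate test function on $\tilde T$ is the following: take $w$ on the half of $\tilde T$ that sits inside $T^-$, and its mirror image on the other half. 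This gives a symmetric $H^1$ function, vanishing on $\tilde\Gamma$ only if the level structure is compatible. To fix this we use the monotonicity property of Li and Yao \cite{LY}: $w$ is monotone along rays from $O$, and more precisely its level sets are graphs over the angular variable. This lets us instead compare $\tilde T$ with $T$ through a rearrangement along rays emanating from $O$, namely a polar Steiner-type symmetrization with respect to $L$ in the angular variable. Writing $w$ in polar coordinates $(r,\theta)$, we symmetrize each circle $\{r=\text{const}\}\cap T$ about $L$. This preserves the distribution function of $w$, and hence $\int u$ and the area. The angular Pólya--Szegő inequality then gives $\int|\partial_\theta u|^2/r^2$ nonincreasing, and $\int|\partial_r u|^2$ nonincreasing as well, by the standard argument for Steiner symmetrization along circles. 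The monotonicity from \cite{LY} ensures that the symmetrized domain of positivity and the Dirichlet condition transform correctly. Then the Rayleigh quotient in \eqref{f:tmixvar} gives $\T_{\rm mix}(\tilde T)\ge\T_{\rm mix}(T)$.

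\textbf{Main obstacle.} The hard step is that circular symmetrization of a triangle does not produce a triangle: the image of $T$ is a region bounded by a curve, not the segment $\tilde\Gamma$. We would handle this by symmetrizing the function rather than the domain. The function's support stays inside the angular sector, and the zero set contains the symmetrized image of $\Gamma$. We then need this image to lie outside $\tilde T$, or at least to dominate it, in a way that still yields a competitor vanishing on $\tilde\Gamma$. If that fails, we would revert to a direct two-piece reflection argument: reflect the part of $T$ beyond $\tilde\Gamma$ into the missing part of $\tilde T$, and use \cite{LY} to show that $w$ on the reflected piece is dominated.

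\textbf{Strictness and uniqueness.} Equality in the symmetrization forces $w$ to be symmetric about $L$. This is incompatible with $a\ne b$, because $\Gamma$ is not symmetric and $w$ vanishes exactly on $\Gamma$. Hence the maximizer found in the existence step is the isosceles triangle, and it is the unique one.
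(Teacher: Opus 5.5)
\textbf{There is a genuine gap: the reflection/symmetrization step does not go through.} Your existence step is essentially the paper's. A Poincar\'e inequality along segments parallel to the shorter Neumann side $[O,B]$, each of length at most $b$ and ending on $\Gamma$, gives $\T_{\rm mix}(T)\le b^2/2$ at unit area. Use that direction: slices perpendicular to $\Gamma$ need not end on $\Gamma$ when a base angle is obtuse, which happens in the needle regime if $\alpha<\pi/2$.

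The gap is in the comparison with the isosceles triangle $\tilde T$, and the obstacle you flag is fatal, not technical.
\begin{itemize}
\item Circular symmetrization preserves the length of each arc $\{r=\rho\}\cap T$. So the support $T^\star$ of the symmetrized function still reaches distance $a$ from $O$. On the other hand, $\tilde T\subset\overline{B(O,\tilde a)}$ with $\tilde a=\sqrt{ab}<a$. Hence $T^\star\not\subseteq\tilde T$, and the symmetrized function is never admissible on $\tilde T$, whether you symmetrize the domain or only the function.
\item Symmetrizing about $L$ moves away from the isosceles shape. The far arcs of $T^\star$ are centred on $L$, whereas the arcs of $\tilde T$ with $|O\tilde M|<\rho<\tilde a$ hug the two rays and miss $L$.
\item The energy inequality itself is unjustified. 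For $\rho<{\rm dist}(O,\Gamma)$ the arc is the whole interval $0<\theta<\alpha$, and $w$ takes free values at both Neumann ends. Symmetric decreasing rearrangement about the midpoint of an interval does not lower $\int|f'|^2$ for such functions: a monotone $f$ gets exactly four times the energy. Near $O$, $w$ generically has the monotone angular profile $\rho^{\pi/\alpha}\cos(\pi\theta/\alpha)$.
\item Your fallback is not a construction. $T\setminus\tilde T$ and $\tilde T\setminus T$ are the triangles $\triangle(\tilde A,A,P)$ and $\triangle(B,\tilde B,P)$, with $P=\Gamma\cap\tilde\Gamma$. They are in general not congruent and lie along different rays, so no reflection maps one onto the other. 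You also do not identify which estimate from \cite{LY} would give the domination. The property of \cite{LY} the paper actually uses is monotonicity of $w$ in the direction of the inward normal to $\Gamma$ when $\alpha\le\pi/2$, not monotonicity along rays from $O$.
\end{itemize}
Consequently neither $\T_{\rm mix}(\tilde T)>\T_{\rm mix}(T)$ nor uniqueness is established, since you derive uniqueness from the equality case of an inequality you do not have.

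\textbf{The missing idea} is to replace the global comparison with a first-variation argument localized on $\Gamma$. Given existence, the paper rotates $\Gamma$ about its midpoint $M$, with the endpoints sliding on the Neumann rays. This keeps the triangle in $\mathcal A_\alpha$ and leaves the area unchanged to first order, and only the Dirichlet side moves normally. So, by \eqref{f:sder}, every maximizer satisfies \eqref{f:critical}: $|\nabla w_T|^2$, weighted by $|x-M|$, is balanced on the two halves of $\Gamma$.

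For a non-isosceles $T$ with $\angle OAB>\angle OBA$, the paper reflects across the perpendicular to $\Gamma$ at $M$, which meets $[A,O]$ at $N$, and sets $z(x)=w_T(x)-w_T(x^*)$ on $\triangle(N,M,A)$.
\begin{itemize}
\item $z$ is harmonic and vanishes on $[N,M]\cup[A,M]$.
\item $\partial_\nu z\ge0$ on $[A,N]$, because the angular derivative of $w_T$ about a base vertex is nonnegative (Lemma \ref{l:angular}). That lemma is proved by the maximum principle with corner barriers; the sign on the far Neumann side comes from \cite{LY}.
\item Then $z\ge0$ and $z\not\equiv0$, and Hopf's lemma gives $|\nabla w_T(x)|>|\nabla w_T(x^*)|$ on $(A,M)$, so \eqref{f:critical} fails strictly.
\end{itemize}
Thus the reflection acts on the boundary flux of $w_T$ within a single triangle, rather than producing a competitor on another one. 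Uniqueness is then automatic, because every maximizer must be critical.
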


\medskip

\subsection{A concavity property of the mixed torsional rigidity of triangles}\label{s:local-concavity}
Let $\mathcal T (\alpha)$ denote the mixed torsional rigidity  of 
the isosceles triangle with equal sides $1$ and vertex angle $\alpha$, defined according to \eqref{f:tmixvar}, by choosing as  Dirichlet side the base opposite to the vertex  angle $\alpha$.
Set 
\begin{equation}\label{f:effe} 
F(\alpha):=\frac{\alpha^2}{\sin^2\alpha}\,\mathcal T (\alpha) \qquad    \forall \alpha \in (0 , \pi)   \,.
\end{equation} 
We prove (see Section \ref{sec:concavity}):

\begin{theorem}\label{t:concavity}
The function  $F$    satisfies
\begin{equation}\label{f:conc-curvature}
F''(\alpha)<0    \qquad \forall \alpha \in (0 , \pi)\,. 
\end{equation}

\end{theorem}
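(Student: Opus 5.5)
The plan is to obtain concavity from a \emph{dual} variational representation. By \eqref{f:tmixvar}, $\mathcal T(\alpha)$ is a supremum over $\alpha$-dependent test functions. A supremum of functions that are affine (or convex) in the parameter tends to be convex, which is the wrong direction. I therefore pass to the complementary (Thomson-type) principle
$$
\mathcal T(\alpha)=\inf\Big\{\int_{T_\alpha}|\sigma|^2\,dx:\ \operatorname{div}\sigma=-1 \text{ in } T_\alpha,\ \sigma\cdot\nu=0 \text{ on the two equal sides}\Big\}.
$$
This holds by the usual convex duality, with the infimum attained at $\sigma=\nabla w_\alpha$, where $T_\alpha$ denotes the isosceles triangle with legs $1$ and apex angle $\alpha$. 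An infimum of a family of functions of $\alpha$, each concave in $\alpha$, is concave. The task is thus to transplant admissible fields to a fixed reference domain in such a way that, after multiplying by $\alpha^2/\sin^2\alpha$, each transplanted energy is a concave function of $\alpha$.

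Concretely, I place the apex at the origin with the axis of symmetry along the $x_1$-axis, and use polar coordinates $(r,\theta)$ with $|\theta|<\alpha/2$ and $r<\cos(\alpha/2)/\cos\theta$. I then introduce normalized coordinates $s=2\theta/\alpha\in(-1,1)$ and $\rho=r\cos\theta/\cos(\alpha/2)\in(0,1)$. These map $T_\alpha$ onto the fixed rectangle $(0,1)\times(-1,1)$, with the Dirichlet base going to $\{\rho=1\}$ and the Neumann legs to $\{s=\pm1\}$.

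Admissible fields are transported by the Piola transform, which preserves both the divergence constraint (up to the Jacobian) and the zero normal flux on the legs. In the reference variables, the energy $\int|\sigma|^2$ becomes a quadratic form whose coefficients are explicit trigonometric functions of $\alpha$ and $s$, such as $\cos(\alpha/2)$, $\cos(\alpha s/2)$, $\tan(\alpha s/2)$ and powers of $\alpha$. The prefactor $\alpha^2/\sin^2\alpha$ is designed to compensate the area factor $\sin\alpha/2$ together with the angular stretching $\alpha/2$. As a sanity check, as $\alpha\to0$ the triangle is close to a sector of radius $1$ and angle $\alpha$, whose mixed torsion function is $(1-r^2)/4$; hence $F(\alpha)\sim\alpha/16$, consistent with concavity near $0$.

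The key steps, in order, are the following.
\begin{enumerate}
\item Establish the dual principle, including attainment at $\nabla w_\alpha$.
\item Fix the normalization of the transported fields; the divergence constraint becomes $\alpha$-dependent, so I absorb it by subtracting an explicit particular field, for instance one built from the sector solution $\frac12 x$ suitably modified, and then optimize over a fixed linear space of divergence-free fields.
\item Prove that, for every fixed reference field, the map $\alpha\mapsto \frac{\alpha^2}{\sin^2\alpha}\int|\sigma_\alpha|^2$ is concave on $(0,\pi)$. This reduces to checking the sign of second derivatives of finitely many explicit trigonometric coefficients, pointwise in $s$.
\item Obtain strictness by evaluating at the optimal field $\nabla w_\alpha$. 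There the infimum is touched by a smooth concave function whose second derivative is strictly negative unless the field has a degenerate component, and degeneracy is excluded because $w_\alpha$ is not radial.
\end{enumerate}

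The main obstacle is step 3. The naive transplanted coefficients need not be concave, particularly near $\alpha=\pi$, where $\sin\alpha\to0$ and the factor $\alpha^2/\sin^2\alpha$ blows up. If the pointwise concavity fails, I will instead compute $F''$ directly. This uses first and second shape derivatives along the one-parameter family $T_\alpha$ (Hadamard formulas on the Dirichlet base, where $\partial_\nu w_\alpha$ is the only boundary datum that matters). I would then write $F''$ as $-\int|\nabla \dot w|^2$ plus boundary terms, and control the sign of the boundary terms using the Li--Yao monotonicity of the mixed torsion function and the level-set geometry from \cite{BF26} already invoked earlier in the paper.
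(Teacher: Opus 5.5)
\textbf{There is a genuine gap in Step 3.} Your Step 1 is fine: for admissible $\sigma$ and $u\in H^1_\Gamma$ one has $\int u=\int\nabla u\cdot\sigma$, with equality in Cauchy--Schwarz at $\sigma=\nabla w_\alpha$. Step 3, however, is not merely hard. For the transplant you describe it is false, so the ``infimum of concave members'' mechanism cannot work.

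With $h=\cos\frac\alpha2$, your coordinates give $x_1=\rho h$ and $x_2=\rho h\tan\frac{\alpha s}{2}$. The Piola transform then yields $\int_{T_\alpha}|\sigma|^2=\int\tilde\sigma^{T}A\,\tilde\sigma\,d\rho\,ds$ with
\[
A(\alpha;\rho,s)=\begin{pmatrix}\frac{2}{\rho\alpha}&\tan\frac{\alpha s}{2}\\ \tan\frac{\alpha s}{2}&\frac{\rho\alpha}{2}\sec^2\frac{\alpha s}{2}\end{pmatrix}.
\]
Write the admissible fields as $\sigma_p(\alpha)+c(\alpha)P_\alpha\tau$, where $P_\alpha$ is the Piola map, $\tau$ ranges over a fixed space of divergence-free reference fields, and $c$ is any scalar renormalization. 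Set $\lambda=c^2\alpha^2/\sin^2\alpha$.

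\begin{itemize}
\item Replacing $\tau$ by $k\tau$ with $k\to\infty$, and then taking $\tau=\nabla^\perp\psi$ with $\psi$ compactly supported and oscillating, shows that concavity for every $\tau$ forces $\alpha\mapsto\lambda(\alpha)A_{22}(\alpha;\rho,s)$ to be concave for every $(\rho,s)$.
\item At $s=0$ this makes $f:=\lambda\alpha$ concave and nonnegative.
\item Letting $s\to1^-$ makes $\lambda\alpha\sec^2\frac\alpha2$ concave and nonnegative, hence bounded above near $\pi$.
\item Therefore $f=O((\pi-\alpha)^2)$ as $\alpha\to\pi$. But a nonnegative concave function that is positive somewhere decays at most linearly at an endpoint, so $\lambda\equiv0$.
\end{itemize}

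The particular field you suggest also fails if left unmodified. The field $\sigma=-x/2$ is exactly admissible on $T_\alpha$ and gives the member $\frac{\alpha^2(2+\cos\alpha)}{48\sin\alpha}$, which blows up at $\pi$; a concave function on a bounded interval is bounded above. The same obstruction arises for the affine pull-back to a fixed triangle, whose members are $\frac{2s^2}{\tan s}\int\sigma_1^2+2s^2\tan s\int\sigma_2^2$ with $s=\alpha/2$. So concavity of $F$ cannot be read off from the structure of the admissible class; it requires quantitative information on the optimizer.

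\textbf{What the paper supplies instead.} After the affine pull-back (with $s=\alpha/2$, $t=\tan^{-2}s$), one has $F=G(s)=\frac{2s^2}{\tan s}\bigl(\frac1{16}+\frac{M_1(t)}{48}\bigr)$, and $G''$ is expressed through $M_1,M_2,M_3$. Cauchy--Schwarz arguments give
\begin{itemize}
\item $M_1^2\le M_2$,
\item $0\le M_3\le M_2$,
\item $\frac1{1+4t}<M_1\le\frac1{1+t}$.
\end{itemize}
From these, $G''\le\frac{1}{12\tan s}\Phi(M_1,s)$ with $\Phi$ a concave quadratic in $M$. Negativity of $\Phi$ over the admissible range of $M_1$ is then verified by explicit trigonometric estimates in three ranges of $s$.

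Your duality recovers only one of these ingredients, namely $M_1''\le-\frac2tM_1'$. This is exactly the concavity of $\int_Dv_t$ in $1/t=\tan^2 s$, since that quantity is an infimum of affine functions of $1/t$. Composing with $s\mapsto\tan^2s$ and multiplying by $\frac{2s^2}{\tan s}$ does not preserve concavity. That is why the lower bound $M_1'\ge-\frac{M_1(1-M_1)}{t}$ and the two-sided range of $M_1$ are indispensable.

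\textbf{The fallback.} Your second-shape-derivative alternative is only a sketch and does not close the gap. Li--Yao monotonicity and the level-set information are qualitative first-order facts, and you give no mechanism by which they would fix the sign of the boundary terms in $F''$.
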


\medskip
\subsection{Conclusion}\label{s:combining} 

Let $D$ be a convex polygon with $|D| = |D ^*|$ and vertex tuple $a=(A_1,\ldots,A_{n})$ sufficiently close
to $a^*=(A_1^*,\ldots,A_{n}^*)$.
Let $\{T_i\}_{i=1}^{n}$ and $\{Q_i\}_{i=1}^{n}$  be respectively the triangular covering and 
the convex polygonal partition of $D$ given by Theorem~\ref{t:cover}. 
Let $\T_{mix} (T_i)$ be the mixed torsional rigidity of $T_i$ defined according to \eqref{f:tmixvar}, with Dirichlet condition on the side  $[A_i, A_{i+1}]$.
 Set  
$$
S:=\sum_{i=1 } ^ { n} |T_i| 
\,, 
\qquad \Delta:=S-|D| .
$$
We shall prove that, for $a$ sufficiently close to $a^*$, we have 
\begin{equation}\label{f:strategy-jensen}
\T(D)\le \Big (\frac{|D|}{S}\Big )^2 \sum_{i=1}^{n}  \T_{\rm mix}(T_i)
\end{equation}

To this end, we use the polygonal convex partition $\{Q_i\} _{i=1} ^ { n}$ given by Theorem~\ref{t:cover}. Denote by 
$\T_{\rm mix}(Q_i)$  the mixed torsional rigidity of 
$Q_i$, with Dirichlet condition on the base $[A_i, A_{i+1}]$. 
Releasing the internal interfaces gives the estimate
\begin{equation}\label{f:strategy-partition}
\T(D)\le \sum_{i=1}^n \T_{\rm mix}(Q_i).
\end{equation}

Since $D$ is sufficiently close to $D^*$, each triangle $T_i$
is acute. Moreover,  $Q_i$ is convex and contains the whole
base $[A_i,A_{i+1}]$. 
Let $w_i$ and $z_i$ denote the mixed torsion functions of $T_i$ and $Q_i$, respectively, 
and set
$
R_i:=T_i\setminus Q_i$ and 
$\Sigma_i:=\partial Q_i\cap\operatorname{int}T_i$,   if $Q_i \neq T_i$,  Theorem~\ref{t:truncation} gives
\begin{equation}\label{f:strategy-cellloss}
\T_{\rm mix}(Q_i)
\le \T_{\rm mix}(T_i)
-\int_{R_i}w_i\,dx-|R_i|\inf_{\Sigma_i}z_i.
\end{equation}
If $Q_i=T_i$, the mixed torsional rigidities coincide
and there is no loss term.

In the limit as $a\to a^*$, the sets $R_i$
and the interfaces $\Sigma_i$ shrink to the origin: 
Theorem~\ref{t:strategy-asymptotics} yields
\begin{equation}\label{f:strategy-stronger}
\T_{\rm mix}(Q_i)
\le \T_{\rm mix}(T_i)
-\bigl(2w^*(0)+o(1)\bigr)|R_i|.
\end{equation}
This inequality also holds when $Q_i=T_i$,
since then $|R_i|=0$.

\medskip 
Summing over $i$, by using \eqref{f:strategy-partition} and  the fact that 
$\sum_{i=1}^n|R_i|=S-|D|=\Delta$, 
 we obtain  \begin{equation}\label{f:c}
\T(D)\le \sum_{i=1}^{n}  \T_{\rm mix}(T_i)  -\bigl(2w^*(0)+o(1)\bigr)\Delta\,. 
\end{equation}
We now observe that \eqref{f:c} implies \eqref{f:strategy-jensen}, for $a$ close to $a^*$. Indeed, 
let us write for brevity 
$$H :=  \sum_{i=1}^{n}  \T_{\rm mix}(T_i)  \,, \qquad H^* := n \T_{\rm mix}(T^*) \,.$$
In the limit as $a \to a ^*$,   the right--hand side of \eqref{f:strategy-jensen}  has asymptotics
\[
\left(\frac{|D|}{S}\right)^2H
=H-\left(2\frac{H}{S}-\frac{H}{S^2}\Delta\right)\Delta
=H-\left(2\frac{H^*}{|D|}+o(1)\right)\Delta\,.
\]
Thus, whenever $\Delta>0$, it is strictly larger than
the right-hand side of \eqref{f:c},   since, by
\cite[Corollary~2(i)]{BF26}, $w^*$ attains its unique global
maximum at the (NN)-vertex $0$, and therefore  
$$
\frac{H^*}{|D|}
=\frac{1}{|T^*|}\int_{T^*}w^*
<w^*(0).
$$
If $\Delta=0$, then $S=|D|$ and
\eqref{f:strategy-jensen}, for $a$ sufficiently close to $a^*$,  follows directly from \eqref{f:c}. 

To conclude the proof, let us show that the result follows from \eqref{f:strategy-jensen}.  

By  Theorem~\ref{t:tmax},  $$
\T_{\rm mix}(T_i)
\le \frac{4|T_i|^2}{\sin^2\alpha_i} \mathcal T (\alpha_i) 
=\frac{S^2}{\pi^2}F(\alpha_i),
$$
where the last equality follows from the proportionality of the system $\{ T_i \} _ { i= 1} ^n$ and the definition of 
$F$ in \eqref{f:effe}.

Summing the preceding estimate over $i$ and using \eqref{f:strategy-jensen}, we  obtain  
$$
\T(D)\le \Big ( \frac{|D|}{\pi} \Big )  ^2 \sum_{i=1}^{n} F(\alpha_i).
$$
Since $D$ is sufficiently close to the regular polygon, by Theorem \ref{t:concavity} there exist
$\eta_n>0$ and $\kappa_n>0$ such that all the angles $\alpha_i$ belong
to
$\left[\frac{2\pi}{n}-\eta_n,\frac{2\pi}{n}+\eta_n\right]$
and
$$
F(\alpha_i)\leq
F\big (\frac{2\pi}{n}\big )
+F'\big(\frac{2\pi}{n}\big)
\big(\alpha_i-\frac{2\pi}{n}\big)
-\frac{\kappa_n}{2}
\big(\alpha_i-\frac{2\pi}{n}\big)^2.
$$
Summing over $i=1,\ldots,n$ and using
$\sum_{i=1}^n\big(\alpha_i-\frac{2\pi}{n}\big)=0$, we obtain \begin{equation}\label{f:conc-jensen-main}
\sum_{i=1}^{n}F(\alpha_i)
\le n F \big  (\frac {2\pi} n \big  )-\frac{\kappa_n}{2}
\sum_{i=1}^{n} \big  (\alpha_i- \frac {2\pi} n \big  )^2 \,. 
\end{equation}

 Hence, 
\begin{equation}\label{f:pnonreg} 
\T(D)\le \frac{|D|^2}{\pi^2} n F \big (   \frac {2\pi} n  \big  )\,.
 \end{equation}  
By \eqref{f:conc-jensen-main},  equality holds if and only if  $\alpha_i=   \frac {2\pi} n   $ for all $i$.   
For the regular polygon $D^*$ having the same area as $D$, the $n$ fundamental triangles 
 have area $|D|/n$, and vertex angle $ \frac {2\pi} n$.  Therefore, 
\begin{equation}\label{f:preg}
\T(D^*)=\Big ( \frac{|D|}{\pi} \Big )  ^2 n F\big (   \frac {2\pi} n  \big  ).
\end{equation}

 We infer that 
$\T(D)\le \T(D^*)$, 
with strict inequality unless  $\alpha_i=    \frac {2\pi} n   $ for all $i$.   Moreover, equality in the triangular optimization
is attained only when every $T_i$ is isosceles. 
 Since $\alpha_i=   \frac {2\pi} n    $ for every $i$ and since the covering is
proportional, all the triangles $T_i$ have the same area.
Being also isosceles with the same vertex angle, they are
therefore congruent. Hence   $D$ is regular. 
\qed   

\section{Proof of Theorem~\ref{t:cover}}\label{s:proof-cover}

For  clarity, we divide the proof into several steps. 
\medskip 

 \underbar{Step 1.} Let us show that, for a family of triangles
$\{T_i\}_{i=1}^n$ with bases $[A_i,A_{i+1}]$,
and sufficiently close to the fundamental system of $D^*$,
the condition that $\{T_i\}_{i=1}^n$ form an admissible proportional system
is equivalent to requiring that their base angles and the proportionality
ratio $R$ satisfy a suitable system of equations.  More precisely, let us denote by 
$V_i \in D$ the apex of $T_i$, so that
$$T_i= \triangle (A_i, A _{i+1}, V_i ) \qquad \forall i = 1, \dots, n\,, $$
and  let us introduce the vector $(x, y,  R) \in \R ^ { 2n} \times \R$  with components 
$$x_i:= \angle{ (A_{i+1}, A_i, V_i) } \,, \qquad  y _i := \angle ( A_i, A_{i+1} , V_i )\,, \qquad 
R:= \frac{\alpha_i}{| T_i|} \,. $$
Moreover, for every $i = 1, \dots, n$, we denote by   $ \widehat A_{i} $ the interior angle of $D$ at $A_i$, and by  
 $ s_i:=  \mathcal H ^ 1 ( [A_i, A_{i+1}] )$. 
 
 Then $\{T_i\}_{i=1}^n$ is an admissible proportional system for $D$
if and only if $(x,y,R)$ solves the following system: 

\begin{align}
& x_{i+1}+y_i
    = \widehat{A}_{i+1},
    \qquad i=1,\ldots,n,
    \label{bbfp02}\\
    \noalign{\medskip} 
& \frac{2(\pi-x_i-y_i)\sin(x_i+y_i)}
     {s_i^2\sin x_i\sin y_i}
    = R,
    \qquad i=1,\ldots,n.
    \label{bbfp01}
\end{align}  

Indeed, condition \eqref{bbfp02} is equivalent, for systems sufficiently
close to the fundamental one, to requiring that $V_i$ and $V_{i+1}$ lie
on the same inward ray $\ell_{i+1}$ emanating from $A_{i+1}$.
Consequently, $T_i$ and $T_{i+1}$ share a boundary segment which is a
full side of at least one of them, and in particular we have
$\sum_{i=1}^n \alpha_i = 2\pi$.

On the other hand, since $\alpha _ i= \pi - x_i - y _i$, and     $|T_i| = \frac {s_i^2\sin x_i\sin y_i} 
 {2\sin(x_i+y_i)}$, equation  \eqref{bbfp01} corresponds to the proportionality condition $\frac{\alpha_i}{|T_i|} =R$. 
 
 \medskip
 \underbar{Step 2}. We claim that, if   $\{T_i \} _{i = 1 } ^n$ is an admissible proportional system of $D$, 
it yields a covering of $D$  provided the angular variables introduced above
  satisfy the equation
  \begin{equation}\label{bbfp03}  
    \prod_{i=1}^{n}\sin x_i= \prod_{i=1}^{n}\sin y_i \,.
 \end{equation} 
 
Indeed, thanks to property (b) recalled in Section \ref{s:local-covering},  to prove the claim it is enough to show that
 \begin{equation}\label{f:pNN} 
  p _n = \frac{\prod_{i=1}^{n}\sin y_i } {\prod_{i=1}^{n}\sin x_i }   \,. 
  \end{equation}  
  The above equality follows immediately from the recursive definition \eqref{f:recursive} of  the weights
 $\{p_i \} _{i = 1 } ^n$, and of the elementary geometric equality  
 $$ \frac{d(z,q_{i-1})}{d(z,q_i)}  = \frac{\sin y_{i-1} }{\sin x_i}  \qquad \forall z \in \ell _i \setminus  \{A_i\} \,.$$ 
 
 \medskip
 \underbar{Step 3}. In view of Steps 1 and 2,  in order to prove the existence and uniqueness parts in the statement
 of Theorem~\ref{t:cover}, 
 it is enough to show that, for $a$ sufficiently close to $a^*$,  the system of $2n+1$ equations
 \eqref{bbfp02}--\eqref{bbfp01}--\eqref{bbfp03}   admits a unique solution.  
 
 We point out that, in this system, the parameters 
   $ \widehat A_{i} $ and
 $ s_i$ are uniquely determined by the given vector $a = (A_1, \dots, A _n) \in \R ^ { 2n}$, 
 while  the unknown is the vector   $(x, y, R ) \in \R ^ {2n +1} $.

We consider the function 
  $$ \Phi  : \R^{2n+1} \times \R^{2n} \to \R^{2n+1} \,,$$   
depending on the variables $(x_i,y_i,R)\in \R^{2n+1}$ and $a\in \R^{2n}$,  whose components 
 $(\Phi ^ 1, \Phi ^ 2, \Phi ^ 3) \in \R ^n \times \R ^n \times \R$ are 
defined by 
$$
\begin{aligned}
& \Phi ^ 1 (x_i,y_i,R, a)   := 2(\pi - x_i-y_i)\sin (x_i+y_i)-Rs_i^2\sin x_i \sin y_i 
\\ 
& \Phi ^ 2 (x_i,y_i,R, a)   := x_{i+1}+y_i - \widehat A_{i+1} 
\\
& \Phi ^ 3 (x_i,y_i,R, a)   := \prod_{i=1}^{n}\sin x_i- \prod_{i=1}^{n}\sin y_i
\end{aligned}
$$ 
When $a = a ^*$,  the vectors $(x, y)$ and $R$ take respectively the values   $(x^* , y ^*)$ and $R^*$ given (componentwise)  by 
$$
x_i^*=y_i^*=\frac{n-2}{2n}\pi\,, \qquad R^*= \frac{4\pi}{n\sin \frac{2\pi}n}\,,$$ 
 and 
 $$ \Phi   (x ^*, y ^*, R ^ *, a ^* ) = 0\,.$$
 Thus, the existence and uniqueness of a solution 
 for $a$ close to $a ^*$ follows from the implicit function theorem, provided the determinant of 
 the Jacobian matrix of $  \Phi  $  in the first $2n+1$ variables, computed at $(x ^*, y^*, R^*, a^*)$,   is nonzero.
This Jacobian matrix is given by 
 $$D_{(x^*,y^*,R^*)}  \Phi   = 
\begin{pmatrix}
c_n I_n &c_n I_n &b_n 1_n^T\\
J_n & I_n &0_n ^T\\
{d_n}   1_n& -  {d_n}  1_n & 0
\end{pmatrix},
$$
 where:  $I_n$ and $J_n$  are $(n\times n)$ matrices given respectively by the Identity matris and 
 \[
J_n :=
\begin{pmatrix}
0 & 1 & 0 & \cdots & 0 \\
0 & 0 & 1 & \cdots & 0 \\
\vdots & & \ddots & \ddots & \vdots \\
0 & \cdots & 0 & 0 & 1 \\
1 & 0 & \cdots & 0 & 0
\end{pmatrix},
\]
$1_n$ and $0 _n$ are the vectors of $\R ^n$ with components 
$$1_n := [1, 1, ... 1] \,, \qquad  0_n:= [ 0, 0, \dots 0] \,,$$  
 and finally $b _n$,  $c_n$,  and $d_n$   are the scalars 
$$
\begin{aligned}
 & b_n : = - (s^*) ^2 \sin ^2 x^* <0
 \\ 
  & c_n:= -2 \sin 2x^*+2(\pi-2x^*)\cos 2x^*-R^*(s^*) ^2 \cos x^* \sin x^*<0\,.
  \\ 
  & 
  d_n := \cos x^*\,(\sin x^*)^{n-1} > 0.   
\end{aligned} 
  $$  
Using the specific structure of $J_n$, some elementary computations give:   
$$\det D_{(x^*,y^*,R^*)} \Phi    = -2n^2 b_n   c_n^{n-1}   d_n    \not=0.$$

As a  direct consequence of our proof using the implicit function theorem, we obtain the continuous dependence of the system $\{ T _i \}_{i = 1} ^n$ on the vector $a$, and hence the convergence  of the apices $V_i$ to $0$ as $a \to a ^*$.

\medskip
\underbar{Step 4.} We finally justify the part of the statement concerning the cells $Q_i$.  
For every $j$, set $\varphi_j(x):=p_jd(x,q_j)$. Since $D$ lies in one
of the closed half-planes determined by the supporting line $q_j$, the
function $d(\cdot,q_j)$ is affine on $D$. Hence each $\varphi_j$ is
affine on $D$, and the convexity of $Q_i$ follows immediately from its definition, since it  is an intersection of half-planes. 
 The fact that $Q_i$ contains the segment $[A_i , A_{i+1}]$ is immediate, since on such segment $d(x, q_i)$ vanishes. 
 To prove the inclusion $Q_i\subset T_i$, notice that, by definition of the weights, we have 
$$\varphi_{i-1}=\varphi_i \quad\text{on }\ell_i,
\qquad
\varphi_i=\varphi_{i+1} \quad\text{on }\ell_{i+1}.
$$
Moreover, at every point in the relative interior of
$[A_i,A_{i+1}]$ one has
$\varphi_i=0<\varphi_{i-1},\varphi_{i+1}$.

Now, $\varphi_i-\varphi_{i-1}$ is affine on $D$ and vanishes on $\ell_i$.
Since it is negative in the half-plane bounded by the line containing
$\ell_i$ to which $\operatorname{int} T_i$ belongs, it is positive in the
other half-plane. Hence $\varphi_{i-1}<\varphi_i$ in the open half-plane
bounded by the line containing $\ell_i$ and disjoint from
$\operatorname{int} T_i$.
Similarly, $\varphi_i-\varphi_{i+1}$ is affine and vanishes on
$\ell_{i+1}$, so that $\varphi_{i+1}<\varphi_i$ in the open half-plane
bounded by the line containing $\ell_{i+1}$ and disjoint from
$\operatorname{int} T_i$. 
 
Since every point of $D\setminus T_i$ belongs to at least one of
these two half-planes, it cannot belong to $Q_i$. Hence
$Q_i\subset T_i$.  

 Finally, it follows directly from their definition that the sets $Q_i$
cover $D$ and have pairwise disjoint interiors, so that they form a
partition of $D$ up to their common boundaries. \qed

\bigskip 
\section{Proofs of Theorems \ref{t:truncation} and
\ref{t:strategy-asymptotics}}
\label{s:proof-strategy-lemmas}

\begin{proof}[Proof of Theorem \ref{t:truncation}]
Let $\Gamma=[A,B]$ be the Dirichlet side of $T$, let $V$ be the
opposite vertex, and denote by $S'$ and $S''$ the two Neumann sides
of $T$, with outward unit normals $\eta'$ and $\eta''$, respectively.
We first prove that
\begin{equation}\label{f:truncation-flux-sign}
\partial_\nu w\geq0
\qquad\text{on }\Sigma,
\end{equation}
where $\nu$ denotes the outward unit normal to $Q$.

 Since $T$ is an acute triangle, it satisfies the assumptions of
\cite[Corollary~2(iii)]{BF26}. Therefore, the mixed torsion function
$w$ is strictly increasing along every segment contained in $T$ and
orthogonal to a Neumann side, when oriented towards that side.   Hence
$$
\nabla w\cdot\eta'\geq0,
\qquad
\nabla w\cdot\eta''\geq0
\qquad\text{in }T.
$$

Let $L$ be one of the line segments forming $\Sigma$, and let $\nu$
be the outward unit normal to $Q$ along $L$. Since $Q$ is convex and
contains the whole base $\Gamma$, the segment $\Gamma$ lies in the
closed half-plane bounded by the line containing $L$ and containing
$Q$. The vertex $V$ lies strictly in the opposite half-plane.
Indeed, otherwise, by convexity of that half-plane,
$T=\triangle(A,B,V)$ would be entirely contained in it, contradicting
$L\subset\operatorname{int}T$.

Since $\nu$ points towards the half-plane containing $V$, we have
$$
\nu\cdot(A-V)<0,
\qquad
\nu\cdot(B-V)<0.
$$
These two inequalities imply that $\nu$ belongs to the positive cone
generated by $\eta'$ and $\eta''$. Hence, there exist
$\lambda',\lambda''\geq0$ such that
$$
\nu=\lambda'\eta'+\lambda''\eta''.
$$
Consequently,
$$
\partial_\nu w
=
\lambda'\,\nabla w\cdot\eta'
+
\lambda''\,\nabla w\cdot\eta''
\geq0
\qquad\text{on }L,
$$
and \eqref{f:truncation-flux-sign} follows.

We now derive an exact identity for the loss of mixed torsional
rigidity. Testing the weak equation for $z$ with $w|_Q$ gives
$$
\int_Q\nabla z\cdot\nabla w=\int_Qw,
$$
whereas, by integration by parts for $w$ in $Q$,
$$
\int_Q\nabla w\cdot\nabla z
=
\int_Qz+\int_\Sigma z\,\partial_\nu w.
$$
Hence
$$
\int_Qw-\int_Qz
=
\int_\Sigma z\,\partial_\nu w,
$$
and therefore
\begin{equation}\label{f:truncation-identity}
\T_{\rm mix}(T)-\T_{\rm mix}(Q)
=
\int_R w\,dx
+
\int_\Sigma z\,\partial_\nu w.
\end{equation}

Moreover, integrating $-\Delta w=1$ over $R$ and recalling that
$\partial_\nu w=0$ on the Neumann sides of $T$, we get
\begin{equation}\label{f:truncation-flux-mass}
\int_\Sigma\partial_\nu w=|R|,
\end{equation}
 because the outward unit normal to $R$ along $\Sigma$ is $-\nu$. 

By \eqref{f:truncation-flux-sign},
\eqref{f:truncation-identity}, and
\eqref{f:truncation-flux-mass}, we obtain \eqref{r:truncation-bounds}.  
\end{proof}

\bigskip

\begin{proof}[Proof of Theorem \ref{t:strategy-asymptotics}]
Fix $i\in\{1,\ldots,n\}$ and let $a_h\to a^*$. 

For simplicity, write
$T_h,Q_h,R_h,\Sigma_h,w_h,z_h$ for the corresponding objects.
Then $T_h$ and $Q_h$ converge to the fundamental triangle $T^*$,
whereas $R_h$ and $\Sigma_h$ shrink to the origin, which is the vertex corresponding
to the center of the regular polygon.

Let $F_h:T^*\to T_h$ be the affine map sending the vertices of $T^*$
onto those of $T_h$. Since $F_h\to{\rm Id}$, the pulled-back
solutions $w_h\circ F_h$ converge to $w^*$ in $H^1(T^*)$ and satisfy
a uniform $H^2$ estimate on $T^*$. Hence, by compactness, the
convergence is uniform on $\overline{T^*}$. Since $R_h$ shrinks to
$0$ and $w^*$ is continuous at $0$, it follows that
$$
\sup_{R_h}|w_h-w^*(0)|=o(1).
$$
Therefore
\begin{equation}\label{f:asymptotic-cap}
\int_{R_h}w_h
=
\bigl(w^*(0)+o(1)\bigr)|R_h|.
\end{equation}

It remains to prove the assertion concerning $\Sigma_h$.
 Recall that this assertion is needed only for those $h$ such that
$\Sigma_h\neq\varnothing$. For such $h$, we prove that
$$
\sup_{x\in\Sigma_h}|z_h(x)-w^*(0)|=o(1),
$$
which immediately yields
$$
\inf_{\Sigma_h}z_h=w^*(0)+o(1).
$$

We now establish a uniform modulus of continuity for $z_h$.

Let $\Gamma_h$ denote the Dirichlet side of $Q_h$,  and 
let $\mathcal R_h$ be the reflection with respect to the line containing
$\Gamma_h$. 
Since $Q_h\subset T_h$ and contains the whole base $\Gamma_h$, the two angles of $Q_h$ at the endpoints of $\Gamma_h$ are bounded above by the corresponding base angles of $T_h$. Since $T_h$ converges to the acute triangle $T^*$, these angles are strictly smaller than $\pi/2$ for $h$ large. Therefore, the symmetrized polygon 
$\widehat Q_h:=Q_h\cup\mathcal R_h(Q_h)$
is convex. 

We denote by $\widehat z_h \in H^1(\widehat Q_h)$ the odd extension of $z_h$ across $\Gamma_h$, which solves the Neumann problem 
$$
-\Delta\widehat z_h=f_h
\quad\text{in }\widehat Q_h,
\qquad
\partial_\nu\widehat z_h=0
\quad\text{on }\partial\widehat Q_h,
$$
where
$$
f_h
=
\begin{cases}
1, & \text{in }Q_h,\\
-1, & \text{in }\mathcal R_h(Q_h).
\end{cases}
$$
In particular,
$
\int_{\widehat Q_h}f_h=0$, so that the compatibility condition for the Neumann problem is satisfied. 

By the Lipschitz estimate for the Poisson--Neumann problem on convex
domains, see \cite[Section 2]{mazya}, for every fixed $q>2$ we have
$$
\|\nabla\widehat z_h\|_{L^\infty(\widehat Q_h)}
\leq
c(q)\,C_{\widehat Q_h}^{-1}
|\widehat Q_h|^{(q-2)/(2q)}
\|f_h\|_{L^q(\widehat Q_h)},
$$
where $C_{\widehat Q_h}$ denotes the constant in the relative
isoperimetric inequality for $\widehat Q_h$.

In our case, since $\|f_h\|_{L^q(\widehat Q_h)}
=|\widehat Q_h|^{1/q}$, the right-hand side is
$$
c(q)\,C_{\widehat Q_h}^{-1}|\widehat Q_h|^{1/2}.
$$

Since the cells $Q_h$ are defined by a fixed finite family of affine
inequalities whose coefficients depend continuously on $a_h$, and
$a_h\to a^*$, the convex polygons $\widehat Q_h$ satisfy, for $h$
large, a uniform cone condition with fixed parameters. Moreover,
their diameters are uniformly bounded. Hence, by \cite[Corollary~3.2]{thomas}, the relative isoperimetric inequality holds on the family
$\{\widehat Q_h\}$ with a constant independent of $h$. It follows that, for some $C>0$
independent of $h$,
\begin{equation}\label{f:uniform-lipschitz-zh}
\|\nabla\widehat z_h\|_{L^\infty(\widehat Q_h)}
\leq C.
\end{equation}

We next identify the limit of $z_h$ away from $0$. Let $K$ be a
compact subset of $\overline{T^*}\setminus\{0\}$. For $h$ large,
$K\cap T^*$ is contained in $Q_h$, up to the harmless displacement
of the boundary produced by $F_h$. The  preceding estimates therefore
yield compactness on every such $K$.

Let $\varphi$ be smooth on $\overline{T^*}$, vanish on the Dirichlet
side, and vanish in a neighbourhood of $0$. Extend $\varphi$
smoothly to a neighbourhood of $\overline{T^*}$ and set
$$
\varphi_h:=\varphi\circ F_h^{-1}.
$$
For $h$ large, $\varphi_h$ is an admissible test function for the
weak problem defining $z_h$, so that
$$
\int_{Q_h}\nabla z_h\cdot\nabla\varphi_h
=
\int_{Q_h}\varphi_h.
$$
Passing to a subsequence and then to the limit, using
$F_h\to{\rm Id}$ and $|Q_h\triangle T^*|\to0$, we obtain
$$
\int_{T^*}\nabla z\cdot\nabla\varphi
=
\int_{T^*}\varphi.
$$
The uniform Lipschitz estimate and the convergence of the Dirichlet
sides also imply that $z$ has zero trace on the Dirichlet side of
$T^*$. 

Since a point has zero $H^1$-capacity in dimension two, test
functions vanishing in a neighbourhood of $0$ are dense in the
natural energy space of the mixed problem. Hence $z$ is the mixed
torsion function $w^*$. By uniqueness of the limit,
$
z_h\to w^*
$
locally uniformly on
$\overline{T^*}\setminus\{0\}$.

It remains only to transfer the convergence to the shrinking interfaces.
Recall that this assertion is required only for those $h$ such that
$\Sigma_h\neq\varnothing$.

Fix $\varepsilon>0$. By continuity of $w^*$ and
\eqref{f:uniform-lipschitz-zh}, choose
$y\in\operatorname{int}T^*$ sufficiently close to $0$ so that
$$
C|y|+|w^*(y)-w^*(0)|<\frac{\varepsilon}{2}.
$$
For $h$ large, $y\in Q_h$, and the local uniform convergence gives
$$
|z_h(y)-w^*(y)|<\frac{\varepsilon}{4}.
$$
Moreover, since $\Sigma_h$ shrinks to $0$, uniformly for
$x\in\Sigma_h$ we have
$$
|x-y|\leq |y|+o(1).
$$
Therefore, by \eqref{f:uniform-lipschitz-zh}, for every
$x\in\Sigma_h$,
$$
\begin{aligned}
|z_h(x)-w^*(0)|
&\leq
|z_h(x)-z_h(y)|
+
|z_h(y)-w^*(y)|
+
|w^*(y)-w^*(0)|\\
&\leq
C|x-y|
+
|z_h(y)-w^*(y)|
+
|w^*(y)-w^*(0)|
<\varepsilon
\end{aligned}
$$
for $h$ sufficiently large. Hence
$$
\sup_{x\in\Sigma_h}|z_h(x)-w^*(0)|=o(1),
$$
and consequently
$\inf_{\Sigma_h}z_h=w^*(0)+o(1)$.
Together with \eqref{f:asymptotic-cap}, this proves
\eqref{f:strategy-asymptotics}.
\end{proof}

\section{Proof of Theorem \ref{t:tmax}} \label{sec:isosceles}

Theorem \ref{t:tmax}  follows directly from following two facts:  
 problem \eqref{f:sup} admits a solution (see 
Proposition \ref{p:existence}),  and the isosceles triangles are the unique triangles 
in $\mathcal A _\alpha$  which are critical  for the energy $\mathcal E _{mix}$ 
under suitable shape perturbations (see Proposition \ref{p:reflection}).  
 \begin{proposition}\label{p:existence}
The supremum in \eqref{f:sup} is finite and is attained.
 \end{proposition}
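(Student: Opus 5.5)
We prove that $\mathcal E_{mix}$ is bounded on $\mathcal A_\alpha$ and that a maximizing sequence cannot degenerate. Since $\mathcal E_{mix}$ is invariant under rigid motions and dilations, we normalize every $T\in\mathcal A_\alpha$ by placing the (NN)-vertex at the origin with unit area. Such a triangle is determined by the lengths $a,b$ of the two sides meeting at the (NN)-vertex, subject to $\frac12 ab\sin\alpha=1$. It is therefore parametrized by one parameter $t=a/b\in(0,\infty)$, and the symmetry $t\mapsto 1/t$ is given by reflection. On any compact range $t\in[t_0,1/t_0]$ the triangles vary continuously in the Hausdorff sense, with a uniform cone condition. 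There the mixed torsional rigidity depends continuously on $t$: pull back by affine maps converging to the identity and use the variational characterization \eqref{f:tmixvar}, exactly as in the proof of Theorem~\ref{t:strategy-asymptotics}. Hence a maximum exists on each such compact range.

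\textbf{Finiteness.} Bound $\T_{\rm mix}(T)$ through the quotient in \eqref{f:tmixvar}. For $u\in H^1_\Gamma(T)$, use a Poincaré-type inequality for functions vanishing on $\Gamma$, integrating along segments from $\Gamma$ through the triangle. Alternatively, extend $u$ by even reflection across the two Neumann sides and compare with a Dirichlet torsion of a larger domain; but the simplest route is the one-dimensional integration along lines parallel to a fixed direction hitting $\Gamma$. This gives $\int_T u\le C(\alpha)\,|T|^{1}\,\|\nabla u\|_{L^2}$ for unit-area triangles in $\mathcal A_\alpha$. Since $\alpha\le\pi/2$, the height from the (NN)-vertex onto $\Gamma$ is at most comparable to $\sqrt{|T|}$ times a constant depending on $\alpha$. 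The estimate follows by Cauchy--Schwarz along vertical fibers after choosing the direction orthogonal to $\Gamma$: every point of $T$ is joined to $\Gamma$ by a segment orthogonal to $\Gamma$ of length at most the height $h$. This yields $\T_{\rm mix}(T)\le h^2|T|$, and $h^2\le 2|T|\cot(\alpha/2)$-type bounds give finiteness.

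\textbf{Non-degeneracy.} This is the main obstacle. As $t\to0$ with unit area, one side at the (NN)-vertex becomes long and the other short, and the triangle becomes a thin needle. In this regime the fiber argument gives a better bound: the distance to $\Gamma$ along directions orthogonal to $\Gamma$ stays controlled, but the relevant quantity is the height $h$ onto $\Gamma$. Since $\Gamma$ becomes long, of length $\to\infty$, we have $h=2/|\Gamma|\to0$. Thus $\T_{\rm mix}(T)\le h^2|T|\to0$, so $\mathcal E_{mix}\to0$ along degenerating sequences. We must check that the fiber argument is legitimate: each vertical fiber from a point of $T$ orthogonal to $\Gamma$ reaches $\Gamma$ inside $T$. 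This holds when both base angles are at most $\pi/2$. If a base angle is obtuse, which can happen for thin triangles, some fibers exit through a Neumann side. In that case we instead use fibers in the direction of the longer Neumann side, or we reflect evenly across the short Neumann side so that the doubled domain has all its Dirichlet boundary reachable by short segments. Handling this obtuse-base case cleanly is the step we expect to require care.

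Since $\mathcal E_{mix}$ is positive, for instance at the isosceles triangle, and tends to $0$ at both ends $t\to0,\infty$, a maximizing sequence stays in a compact range $[t_0,1/t_0]$. Continuity then gives attainment.
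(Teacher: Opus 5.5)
You have the right architecture: scale-invariant normalization, a one-dimensional Poincar\'e inequality along fibers ending on $\Gamma$, decay of $\mathcal E_{mix}$ along degenerating sequences, then compactness and continuity. However, the decisive estimate is not established.

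Fibers orthogonal to $\Gamma$ reach $\Gamma$ inside $T$ only if both base angles are at most $\pi/2$. Let $O$ be the (NN)-vertex with $|OA|=a\ge b=|OB|$. The law of cosines shows that the angle at $B$ is obtuse exactly when $b<a\cos\alpha$. So for $\alpha<\pi/2$, your bound $\T_{\rm mix}(T)\le C\,h^2|T|$ is justified only on the compact range $\cos\alpha\le a/b\le 1/\cos\alpha$, where continuity already suffices. It fails on the entire degenerate regime, where the angle at $B$ tends to $\pi-\alpha>\pi/2$. Thin triangles are therefore not an occasional exception: they are precisely the case that the non-degeneracy step must handle, and so must the finiteness step as you wrote it.

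Your first fallback goes the wrong way. Fibers parallel to the longer Neumann side $[O,A]$ start on $[O,B]$ and have length up to $a\to\infty$, so they only give a bound of order $a^2$. The reflection across the short side is left unspecified. Moreover, whether the orthogonal fibers then end on the reflected Dirichlet side rather than on the reflected Neumann side depends on $\alpha$.

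The missing idea is to use fibers parallel to the \emph{shorter} Neumann side. Take $O=0$, $A=ae_1$, $B=be_\alpha$, and parametrize $T$ by $se_1+te_\alpha$ with $0<s<a$ and $0<t<\ell(s)=b(1-s/a)$. For each $s$, the segment in direction $e_\alpha$ runs from $[O,A]$ to $\Gamma$ and has length $\ell(s)\le b$. This holds for every triangle in $\mathcal A_\alpha$, whatever its base angles. The one-dimensional Poincar\'e inequality on these fibers gives $\int_T u^2\le\frac{b^2}{2}\int_T|\nabla u|^2$, since the constant Jacobian $\sin\alpha$ cancels; hence $\T_{\rm mix}(T)\le\frac{b^2}{2}|T|$. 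Since $b^2\le ab=2|T|/\sin\alpha$, this gives finiteness. At unit area $b\to0$ along degenerating sequences, which gives $\mathcal E_{mix}\to0$. The rest of your argument then goes through unchanged.
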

\begin{proof}   
Let $\{T_h\}$ be a maximizing sequence for problem \eqref{f:sup}, and
assume with no loss of generality that $|T_h|=1$ for every $h$.
Up to rigid motions, we may assume that $T_h = \triangle(O_h,A_h,B_h ) $, where
$$
 O_h=0,\quad A_h=a_h e_1,\quad B_h=b_h e_\alpha,
$$
where
$e_1=(1,0)$ and   $e_\alpha=(\cos\alpha,\sin\alpha)$, 
and the Dirichlet side $\Gamma_h$ is the segment $[A_h,B_h]$.
Exchanging $A_h$ and $B_h$ if necessary, we may assume that
$a_h\geq b_h>0$. Since $|T_h|=1$, we have
\begin{equation}\label{f:area-normalization}
 a_hb_h\sin\alpha=2.
\end{equation}

In what follows, we fix $h$ and, for brevity, write $a, b, T$ in place of 
$a_h, b _h, T_h$. 
The map
$(s,t)\longmapsto se_1+te_\alpha$
parametrizes $T$ over the set
$$
 0<s<a,\qquad
 0<t<\ell(s):=b\left(1-\frac{s}{a}\right),
$$
and has constant Jacobian $\sin\alpha$. Notice that, for every
$s\in(0,a)$, the point
$
 se_1+\ell(s)e_\alpha
$
belongs to the Dirichlet side $\Gamma_h$.

For every $u\in H^1_{\Gamma_h}(T)$,  letting 
$U(s,t):=u(se_1+te_\alpha)$, 
we have $U(s,\ell(s))=0$, and hence
$$
 U(s,t)
 =-\int_t^{\ell(s)}\partial_tU(s,r)\,dr \qquad \text{ for a.e. } (s, t)\,.
$$
 By the Cauchy--Schwarz inequality,
$$
 |U(s,t)|^2
 \leq
 \bigl(\ell(s)-t\bigr)
 \int_t^{\ell(s)}|\partial_tU(s,r)|^2\,dr.
$$

Integrating with respect to $t$ and using Fubini's theorem, we obtain
$$
\begin{aligned}
 \int_0^{\ell(s)}|U(s,t)|^2\,dt
 &\leq
 \int_0^{\ell(s)}
 |\partial_tU(s,r)|^2
 \left[\int_0^r\bigl(\ell(s)-t\bigr)\,dt\right]dr
 \\
 &\leq
 \frac{\ell(s)^2}{2}
 \int_0^{\ell(s)}|\partial_tU(s,r)|^2\,dr
 \\
 &\leq
 \frac{b^2}{2}
 \int_0^{\ell(s)}|\partial_tU(s,r)|^2\,dr.
 \end{aligned} 
$$
Since
$$
 \partial_t U(s,t)
 =\nabla u(se_1+te_\alpha)\cdot e_\alpha,
$$
integration with respect to $s$, taking into account the constant
Jacobian $\sin\alpha$, gives
$$
 \int_Tu^2
 \leq
 \frac{b^2}{2}\int_T|\nabla u\cdot e_\alpha|^2
 \leq
 \frac{b^2}{2}\int_T|\nabla u|^2.
$$
Since $|T|=1$, the Cauchy--Schwarz inequality yields
$$
 \left|\int_Tu\right|^2
 \leq \int_Tu^2
 \leq
 \frac{b^2}{2}\int_T|\nabla u|^2.
$$
It follows from the variational characterization of the mixed
torsional rigidity that
\begin{equation}\label{f:torsion-degeneration-estimate}
 \T_{\rm mix}(T_h)\leq\frac{b_h^2}{2}.
\end{equation}
Moreover, since $b_h\leq a_h$, by \eqref{f:area-normalization} we have
$$
 b_h^2\leq a_hb_h=\frac{2}{\sin\alpha},
$$
and therefore
$$
 \T_{\rm mix}(T_h)\leq\frac{1}{\sin\alpha}.
$$
This proves that the supremum in \eqref{f:sup} is finite.

We now exclude degeneration of the maximizing sequence. Assume by
contradiction that the minimal width of $T_h$ is infinitesimal as
$h\to+\infty$. In view of \eqref{f:area-normalization} and of the
inequality $a_h\geq b_h$, this is equivalent, up to a subsequence, to
$$
 a_h\longrightarrow+\infty,
 \qquad
 b_h=\frac{2}{a_h\sin\alpha}\longrightarrow0.
$$
Estimate \eqref{f:torsion-degeneration-estimate} then gives
$\T_{\rm mix}(T_h)\to 0$. 
Since $|T_h|=1$, we consequently have
$\mathcal E_{mix}(T_h)\to 0$, contradicting the maximizing property of the sequence. 
Thus, the minimal width of $T_h$ is bounded away from zero, and hence, up to a subsequence,
$T_h$ converges in Hausdorff distance to a
nondegenerate triangle
$ T$
which belongs to $\mathcal A_\alpha$ and has area one. By the
continuity of $\mathcal E_{mix}$ under Hausdorff convergence,
$T$ is a solution to problem \eqref{f:sup}.
\end{proof}

\bigskip

We now identify isosceles triangles as the unique stationary triangles for the energy $\mathcal E_{mix}$ under a specific shape perturbation, 
which consists of rotating the Dirichlet side about its midpoint. 
To be more precise, given $T \in \mathcal A _\alpha$, 
let us denote by  $\Gamma  = [A, B]$ its Dirichlet side  and by  $O$ its (NN)-vertex. 
Let us denote by $T _{\e}$ the one-parameter family of triangles $\triangle(O, A_{\e} ,B _{\e})$ 
constructed as follows: 
 \begin{itemize}
\item[--] the line through $A_{\e},B_{\e}$ is obtained by rotating the line through $A,B$ by an angle $\e$  about the midpoint 
$M$ of $[A, B]$;
\item[--]  $A_{\e}$ and $B_{\e}$ lie respectively on the straight lines through $O,A$  and through $O,B$;
\item[--] the direction of the rotation is determined by the convention that, for $\e>0$, $B_{\e}$ lies on the segment $[O, B]$, while for $\e<0$, $A_{\e}$ lies on the segment $[O, A]$.
\end{itemize}
 
We say that $T$ is {\it critical for $\mathcal E _{mix}$ under rotation of $\Gamma$} if  
\begin{equation}\label{f:statmix} 
\frac{d}{d\e }\Big|_{\e=0}\mathcal E_{mix}(T_\e)   = 0\,.
\end{equation} 
Notice that the above equality  can also be rephrased in terms of the mixed torsion function $w _T$. 
 Indeed, the argument in the proof of \cite[Lemma 26]{FV19}
applies to the present mixed problem as well. The reason is that,  under the
above perturbation, the two Neumann sides remain on the same
supporting lines, so that the normal component of the deformation
field vanishes there, and only the moving Dirichlet side contributes
to the shape derivative. Thus,   letting  $\ell:=  \HH ^ 1 ( [A, B])$, it holds
\begin{equation}\label{f:sder} 
 \frac{d}{d\e}\Big|_{\e=0}\T_{\rm mix} (T_\e) 
= \int_{A}^{B}\left(\frac{\ell}{2}-|x-A|\right)\,|\nabla w_T|^2\,d\HH^1(x)\,.
 \end{equation}

Notice that the boundary integral above is well defined. Indeed, by the standard corner expansion for mixed Dirichlet--Neumann problems, see e.g. \cite{Dauge88}, at each (DN)-vertex of opening $\beta<\pi$ the first singular exponent is $\frac{\pi}{2\beta}>\frac12$, and therefore $|\nabla w_T|^2$ is integrable on $\partial T$.

As a consequence of \eqref{f:sder}, condition 
\eqref{f:statmix} is equivalent to requiring  that $w_T$ satisfy the following
overdetermined boundary condition,   where $M$ is the midpoint of $[A, B]$:
 \begin{equation}\label{f:critical}
\int_{A}^{M}|x-M|\,|\nabla w_T|^2\,d\HH^1(x)= \int_{M}^{B}|x-M|\,|\nabla  w_T|^2\,d\HH^1(x)\,.
\end{equation}

The next symmetry statement is at the heart of our proof. It  may be viewed as an 
analogue, for triangles, of the symmetry result in sector-like domains  established in 
\cite[Theorem 1.1]{PaTr21}.

 \begin{proposition}\label{p:reflection} 
  Let $\alpha \in (0, \frac{\pi}{2}]$. Then 
 $T\in\mathcal A _\alpha $ is  critical for $\mathcal E _{mix}$ under rotation of its Dirichlet side
 $\Gamma$ in the sense of  \eqref{f:statmix} 
  if and only if it is isosceles on the base $\Gamma$. 
Equivalently,  if   
   $T= \triangle(O, A, B)$, with $\Gamma= [A, B]$, the mixed torsion function $w _T$  satisfies  the overdetermined condition  \eqref{f:critical}  if and only if $OA = OB$. 
     \end{proposition}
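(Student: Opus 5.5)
The plan is to prove the two implications separately. The direction "isosceles $\Rightarrow$ critical" is by symmetry. The converse is by a reflection/comparison argument in the spirit of \cite{FV19}, which produces a \emph{strict} inequality in \eqref{f:critical} whenever $OA\neq OB$. By \eqref{f:sder}, criticality in the sense of \eqref{f:statmix} is equivalent to \eqref{f:critical}. The derivative of $|T_\e|^{-1/2}$ does not enter, because rotating $\Gamma$ about its midpoint preserves the area to first order. So it suffices to work with \eqref{f:critical}.

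\emph{Isosceles $\Rightarrow$ critical.} Suppose $OA=OB$, and let $\sigma$ be the reflection across the axis of $T$, which passes through $O$ and $M$. By uniqueness of the mixed torsion function, $w_T\circ\sigma=w_T$. Since $\sigma$ maps $[A,M]$ isometrically onto $[M,B]$ and fixes $M$, the two integrals in \eqref{f:critical} coincide.

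\emph{Critical $\Rightarrow$ isosceles.} Assume, without loss of generality, $OA>OB$; then the base angle $\beta_A$ at $A$ is smaller than the base angle $\beta_B$ at $B$. Let $r$ be the line through $M$ orthogonal to $\Gamma$, and let $\sigma$ now denote the reflection across $r$, so that $\sigma$ exchanges $[A,M]$ and $[M,B]$. Let $H_A$ be the connected component of $T\setminus r$ containing $A$. Because $\beta_A<\beta_B$, the reflected Neumann side $\sigma(S_A)$, which leaves $B$ at angle $\beta_A$, enters $T$. I expect $\sigma(H_A)\subset T$, with $\sigma(H_A)$ lying on the $B$-side of $r$; this is an elementary angle comparison, and if $H_A$ is not the whole cap cut by $r$ one restricts to the maximal reflected cap. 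On $\Omega:=\sigma(H_A)$, set
\[
v:=w_T-w_T\circ\sigma .
\]
Then $v$ is harmonic in $\Omega$, vanishes on $\Gamma\cap\partial\Omega=[M,B]$ (both terms have zero trace there), and vanishes on $r\cap\partial\Omega$. The remaining boundary piece is $\sigma(S_A)\cap T$. There $\partial_\nu(w_T\circ\sigma)=0$, while $w_T$ is not a priori controlled.

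\emph{Main obstacle.} The crux is to obtain the sign $v\ge0$ in $\Omega$. I would attack it in one of two ways.
\begin{itemize}
\item Prove $v\ge0$ on $\sigma(S_A)\cap T$ directly, using the monotonicity property of Li and Yao \cite{LY} together with \cite[Corollary~2]{BF26}: $w_T$ increases when moving away from $\Gamma$ towards the Neumann sides and the (NN)-vertex. A point of $\sigma(S_A)$ should then be "farther from $\Gamma$ in the relevant direction" than its mirror image on $S_A$.
\item Alternatively, treat the problem as a mixed problem on $\Omega$: impose Neumann data on $\sigma(S_A)$ for $w_T\circ\sigma$, and note that $\partial_\nu w_T\ge0$ there by \cite{BF26}, since the outer normal of $\Omega$ on $\sigma(S_A)$ lies in the cone spanned by $\eta',\eta''$, exactly as in the proof of Theorem~\ref{t:truncation}. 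The weak maximum principle for the mixed problem then gives $v\ge0$.
\end{itemize}
I would try the second route first, since it reuses the cone argument already available.

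\emph{Conclusion.} Once $v\ge0$ and $v\not\equiv0$ (the domains differ because $\beta_A\ne\beta_B$), Hopf's lemma on the relative interior of $[M,B]$ gives $\partial_\nu v<0$. Since $w_T>0$ in $T$, both $w_T$ and $w_T\circ\sigma$ have negative outer normal derivative on $(M,B)$. Hence $|\nabla w_T(x)|>|\nabla w_T(\sigma x)|$ for $x\in(M,B)$, because on the Dirichlet side the gradient is purely normal. Multiplying by $|x-M|=|\sigma x-M|$ and integrating yields
\[
\int_M^B|x-M|\,|\nabla w_T|^2\,d\HH^1>\int_A^M|x-M|\,|\nabla w_T|^2\,d\HH^1 ,
\]
so \eqref{f:critical} fails. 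Integrability at the (DN)-corners follows from the corner expansion recalled above.
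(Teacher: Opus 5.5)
Your architecture is the paper's. You reflect the cap at the smaller base angle across the perpendicular bisector of $\Gamma$. You then compare $w_T$ with its reflection through a weak maximum principle, for a harmonic function that vanishes on $[M,B]\cup(r\cap\partial\Omega)$ and has one-signed flux on $[B,N]=\sigma(S_A)\cap T$. Finally you conclude with Hopf's lemma on the base, and your sign $v\ge0$ on $\sigma(H_A)$ is the correct one.

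The gap is in the step you yourself flag. Your preferred route gets $\partial_\nu w_T\ge0$ on $[B,N]$ from $\nabla w_T\cdot\eta'\ge0$ and $\nabla w_T\cdot\eta''\ge0$, that is, from \cite[Corollary~2(iii)]{BF26}. The paper invokes that result only for \emph{acute} triangles. In $\mathcal A_\alpha$ only the apex angle is at most $\pi/2$, so the larger base angle $\beta_B$ may be obtuse. In that case these inequalities actually fail near $B$. In polar coordinates at $B$, with $[B,O]$ on $\{\phi=0\}$ and $T$ locally equal to $\{0<\phi<\beta_B\}$, one has $w_T\approx c\,r^{\lambda}\cos(\lambda\phi)$ with $c>0$ and $\lambda=\pi/(2\beta_B)<1$. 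Hence $\nabla w_T\cdot\eta''\approx -c\lambda r^{\lambda-1}\sin\bigl((1-\lambda)\phi\bigr)<0$ throughout the sector, in particular along the initial part of $[B,N]$. So the cone decomposition $\nu=\lambda'\eta'+\lambda''\eta''$ yields no sign, and your argument covers only acute $T$. That is not enough: Theorem~\ref{t:tmax} must exclude every non-isosceles critical triangle in $\mathcal A_\alpha$, and the maximizer given by Proposition~\ref{p:existence} is not known in advance to be acute. Your first route is a heuristic and does not close the gap either.

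The missing ingredient is the paper's Lemma~\ref{l:angular}. Since $[B,N]$ is a ray from $B$, on it $\partial_\nu w_T=|x-B|^{-1}w_\theta$. Here $w_\theta$ is the angular derivative of $w_T$ about $B$, rotating from $[B,A]$ towards $[B,O]$, and it is harmonic in $T$. It is nonnegative on $\partial T$:
on $\Gamma$ because $w_T>0$ in $T$;
on $[B,O]$ it vanishes;
on $[A,O]$ the Neumann condition makes $\nabla w_T$ tangential, with orientation fixed by the monotonicity of $w_T$ along the inward normal to $\Gamma$ from \cite{LY}, which needs only $\alpha\le\pi/2$.
The maximum principle then gives $w_\theta\ge0$ in $T$. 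Because obtuse (DN)-corners keep $w_\theta$ out of $H^1$, this step needs positive harmonic barriers $r^{-q}\cos\bigl(q(\theta-\beta/2)\bigr)$ at the corners rather than an energy argument. With this lemma in place of \cite{BF26}, the rest of your proof works for all $T\in\mathcal A_\alpha$.

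A minor point: ``the domains differ'' does not by itself give $v\not\equiv0$. Argue instead that $v\equiv0$ would force $\partial_\nu w_T=0$ on $[B,N]$. Then $w_T$ would have zero flux through the whole boundary of $\triangle(B,N,O)$, which contradicts $-\Delta w_T=1$ there.
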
 
     
The proof of Proposition \ref{p:reflection}  relies on the following lemma.

     \begin{lemma}\label{l:angular} 
     Under the same assumptions as in Proposition \ref{p:reflection},  the angular derivative of $w_T$
    in a system of coordinates having vertex in $A$ or in $B$, and horizontal axis containing $[A, B]$,  is nonnegative in $T$.     \end{lemma}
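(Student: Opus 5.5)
Fix coordinates with origin at $A$, horizontal axis along $[A,B]$, and $T$ in the upper half-plane. The angular derivative is $\partial_\theta w=x\,\partial_y w-y\,\partial_x w$, the derivative along the rotation field $X=(-y,x)$ about $A$. The plan is to show that $v:=\partial_\theta w_T$ is harmonic in $T$, has nonnegative boundary values (in the appropriate weak sense near the corners), and hence $v\ge0$ by the maximum principle. The case of vertex $B$ is symmetric under reflection: exchange the roles of $A$ and $B$ and reverse the orientation.

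\textbf{Harmonicity.} Since the Laplacian commutes with rotations, $\Delta(X\cdot\nabla w)=X\cdot\nabla(\Delta w)=0$ in $T$. Thus $v$ is harmonic.

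\textbf{Boundary sign.} On the Dirichlet side $[A,B]$ we have $y=0$ and $\partial_x w=0$, so $v=x\,\partial_y w$ with $x\ge0$. Here $\partial_y w\ge 0$, because $w>0$ in $T$ and $w=0$ on $\Gamma$ (Hopf lemma; positivity of $w$ follows from the maximum principle for the mixed problem). On the Neumann side $[O,A]$, which lies on a ray through $A$, the field $X$ is normal to the side, so $v=\pm|x|\,\partial_\nu w=0$. On the remaining Neumann side $[O,B]$, I would use the monotonicity of the mixed torsion function from \cite{LY}, or equivalently \cite[Corollary~2(iii)]{BF26} in the acute case. Since $\partial_\nu w=0$ there, $v=|X|\,(\tau\cdot\nabla w)$ up to sign, where $\tau$ is the tangential component of $X/|X|$, and $\nabla w$ is tangent to $[O,B]$. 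So I need the sign of the tangential derivative of $w$ along $[O,B]$. The point is that $w$ increases from $B$ toward $O$; this is exactly the monotonicity of the mixed torsion function along a Neumann side, namely that the maximum is at the (NN)-vertex, as in \cite[Corollary~2(i)]{BF26} and \cite{LY}. It then remains to check that the rotation about $A$ moves points of $[O,B]$ toward $O$. This holds when the angle at $B$ is acute, or more generally from the orientation of the triangle, and the condition $\alpha\le\pi/2$ should enter here or in ensuring the base angles satisfy the hypotheses of the cited monotonicity.

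\textbf{Corners and conclusion.} Near $A$ the field $X$ vanishes. Near $O$ and $B$, the corner expansions of \cite{Dauge88} give $\nabla w\in L^p$ and $|v|\lesssim r^{\gamma}$ with $\gamma>-1$ (indeed $v$ is bounded, since the exponents $\pi/(2\beta)>1/2$ at (DN) corners and $\pi/\alpha\ge2$ at the (NN) corner make $\nabla w$ bounded or mildly singular). So $v\in H^1_{loc}$ away from the corners, with singularities removable for the Phragmén–Lindelöf/maximum principle in two dimensions, as points have zero capacity. I would apply the maximum principle to $v^-$ tested weakly, or directly to bounded harmonic $v$ with nonnegative boundary values off finitely many points, to conclude $v\ge0$.

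\textbf{Main obstacle.} The hardest step is the sign of the tangential derivative on $[O,B]$, which relies on the precise monotonicity statement from \cite{LY}. I would first try to derive it by applying the same reflection argument to $\partial_\tau w$ on that side.
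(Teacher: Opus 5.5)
Your architecture is the same as the paper's: harmonicity of $v=(-y,x)\cdot\nabla w_T$, the sign of $v$ on the three sides, then a maximum principle with care at the corners. The steps on $[A,B]$ and $[A,O]$ are fine. The gap is the step you flag yourself, the sign of $v$ on $[O,B]$.

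You reduce it to ``$w_T$ increases from $B$ toward $O$'' and assert that this is \emph{exactly} the fact that the maximum sits at the (NN)-vertex. That is a non sequitur: where the maximum is located says nothing about monotonicity along a side. Nor can you fall back on \cite[Corollary~2(iii)]{BF26}, because $\alpha\le\pi/2$ does not make $T$ acute: a base angle may be obtuse, and $\alpha=\pi/2$ is allowed.

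The missing input is \cite[Theorem~1.3]{LY}. \emph{Because} the angle at $O$ is at most $\pi/2$, it gives that $w_T$ is nondecreasing in the direction of the inward normal to the Dirichlet side, i.e.\ $\partial_y w_T\ge 0$ in $T$. The argument on $[O,B]$ then runs as follows.
\begin{itemize}
\item The Neumann condition gives $\nabla w_T=c\,\tau$ with $\tau=(O-B)/|O-B|$.
\item Since $\tau\cdot(0,1)=O_y/|O-B|>0$, the bound $\partial_y w_T\ge0$ forces $c\ge 0$.
\item Finally $(-y,x)\cdot\tau=B_xO_y/|O-B|>0$ at every point of $[B,O]$, with no condition on the angle at $B$, so $v\ge0$ there.
\end{itemize}
This is exactly where the hypothesis $\alpha\le\pi/2$ enters, which you left undecided. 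Reflecting $\partial_\tau w$ is not needed.

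Your corner discussion also contains a false claim: $v$ need not be bounded. At the (DN)-vertex $B$ the field $(-y,x)$ equals $(0,B_x)\neq 0$. If the angle $\beta_B$ there is obtuse, the singular term of the corner expansion makes $\partial_y w_T$, and hence $v$, blow up in general like $r^{\pi/(2\beta_B)-1}$. In that case $v\notin H^1$ near $B$ either. So neither testing with $v^-$ nor the maximum principle for bounded harmonic functions is available, and the zero capacity of points is not the relevant mechanism.

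Your growth bound $|v|\lesssim r^{\gamma}$ with $\gamma=\pi/(2\beta)-1>-1$ is correct, however, and it suffices for a Phragm\'en--Lindel\"of argument, which is how the paper closes the proof. At each (DN)-vertex add the barrier $\delta\, r^{-q}\cos\big(q(\theta-\beta/2)\big)$, with $\max\{0,1-\frac{\pi}{2\beta}\}<q<\frac{\pi}{\beta}$. It is positive and harmonic in $T$, since $T$ lies in the sector of opening $\beta$. Apply the maximum principle on $T$ with small discs around these vertices removed, and let first the radii and then $\delta$ tend to $0$.
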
 
     
 \begin{proof} For definiteness, let us 
 consider a system of coordinates with $A= (0, 0)$, and 
let us assume with no loss of generality that 
  $$B = (B _ x, 0) \ \text{ with } B _ x >0 \, , \qquad O = (O_x, O_y)  \ \text{ with }  O_y >0\,.$$  
 The angular derivative of $w _T$ is given by 
 $$  w _ \theta   := (-y,x)\cdot \nabla w_T\,.
             $$ 
Away from the vertices, $w_\theta$ is smooth. 
Moreover, since $
w_\theta=(-y\partial_x+x\partial_y)w_T$, 
and $-\Delta w_T=1$, a direct computation gives  
\begin{equation}\label{f:equtheta}
\Delta w_\theta=0 \qquad \text{in }T\,.
\end{equation}
 We claim that 
 \begin{equation}\label{f:bordoutheta} 
 w_\theta \geq  0 \qquad \text { on } \partial T \,. 
 \end{equation} 
  Indeed, on the Dirichlet side $[A,B]$ we have
$w_\theta=x\,\partial_y w_T\geq0$, since $x\geq0$, $w_T$ vanishes
on $[A,B]$, and it is positive in $T$.  
 Moreover, we see immediately that  $w_\theta = 0 $ on the Neumann side $[A, O]$,  because on this side 
  $(-y,x)$ is parallel to   the outer
normal to $T$.

To conclude the proof of \eqref{f:bordoutheta}, we have to show that
\begin{equation}\label{f:angularsign}
w_\theta\geq0\qquad\text{on }[B,O]\,.
\end{equation}
Let $\tau_{[B,O]}$ denote the unit tangent vector to $[B,O]$, oriented from $B$ to $O$. Since the angle at the (NN)-vertex $O$ is not larger than $\pi/2$, by \cite[Theorem 1.3]{LY} the function $w_T$ is nondecreasing in the direction of the inward normal to the Dirichlet side $[A,B]$, which in our coordinates is the vertical direction. Hence $\partial_y w_T\geq0$. On the other hand, the Neumann condition on $[B,O]$ implies that, away from the vertices, for some $c \in \R$,
$$
\nabla w_T=c\,\tau_{[B,O]}\,.
$$
Since
$$
\tau_{[B,O]}\cdot(0,1)=\frac{O_y}{|O-B|}>0,
$$
it follows that $c\geq0$.

It remains to observe that $(-y,x)\cdot\tau_{[B,O]}>0$ along $[B,O]$. Indeed, writing $B=(B_x,0)$ and $X=B+t(O-B)$, with $t\in[0,1]$, a direct computation gives
$$
(-X_y,X_x)\cdot\tau_{[B,O]}
=\frac{B_xO_y}{|O-B|}>0.
$$
Therefore,
$$
w_\theta=(-y,x)\cdot\nabla w_T
=c\,(-y,x)\cdot\tau_{[B,O]}\geq0
\qquad\text{on }[B,O]\,.
$$

It remains to propagate the boundary inequality to the interior.
Unlike in the proof of Theorem~\ref{t:strategy-asymptotics}, here the
(DN)-vertices may be obtuse, so that $w_T$ need not belong to $H^2(T)$
and consequently $w_\theta$ need not belong to $H^1(T)$. We therefore
cannot directly use the negative part of $w_\theta$ as a test function,
and argue instead on a truncated domain.  
 
For $i = 1, 2$, let $P_i$   be the (DN)-vertices of $T$, with opening angles $\beta_i$.
Since $\beta_i<\pi$, we can choose $q_i$ such that 
$$
\max\{0,1-\frac{\pi}{2\beta_i}\} < 
q_i<\frac{\pi}{\beta_i}.
$$
In polar coordinates  centred at $P_i$, with
$0<\theta<\beta_i$, the function
$$
\psi _i (r,\theta)
:=
r^{-q_i}
\cos\Big ( q_i\big(\theta-\frac{\beta_i}{2}\big) \Big)
$$
is positive (thanks to the upper inequality satisfied by $q_i$) and harmonic in the corresponding sector.

Let $\Psi:= \psi _ 1+ \psi _2$ be the sum of these barriers over the two
(DN)-vertices. We claim that, for every $\delta>0$ and for all sufficiently small
$\varepsilon>0$,
$$
w_\theta+\delta\Psi\geq0
$$
on the boundary of the domain obtained from $T$ by removing
$\varepsilon$-neighbourhoods of  $P_1$ and $P_2$. 

By the standard corner expansion for mixed
Dirichlet--Neumann problems, see e.g. \cite{Dauge88},
if $\beta$ is the opening of a (DN)-vertex, the first
singular exponent is
$
\lambda=\frac{\pi}{2\beta}.
$
 Since the vector field $(-y,x)$ is bounded on $T$, it follows in particular that, near $P_i$, 
$$
|w_\theta(x)|
\leq C\left(1+|x-P_i|^{\frac{\pi}{2\beta_i} -1}\right).
$$ 
On the other hand, on the circular arc $r=\varepsilon$ we have
\[
\psi_i \geq c_i\varepsilon^{-q_i},
\qquad
c_i:=\cos\left(\frac{q_i \beta_i}{2}\right)>0.
\]
Hence, 
\[
w_\theta+\delta\psi_i
\geq
-C\left(1+\varepsilon^{\frac{\pi}{2\beta_i} -1}\right)
+\delta c_i\varepsilon^{-q_i}.
\]
Since 
$q_i>\max\{0,1-\frac{\pi}{2\beta_i} \}$, 
the right-hand side of the above inequality is nonnegative for all sufficiently small
$\varepsilon>0$. Since $\Psi\geq\psi_i$, the claim follows on the
circular arcs. On the remaining part of the boundary, it follows from
\eqref{f:bordoutheta} and the positivity of $\Psi$. 

 Since both $w_\theta$ and $\Psi$ are harmonic in the truncated domain,
the maximum principle yields
$w_\theta+\delta\Psi\geq0$ there.
Since $\varepsilon$ can be taken arbitrarily small, we obtain
$w_\theta+\delta\Psi\geq0$ away from the (DN)-vertices.
Letting $\delta\to0$, we conclude that
$w_\theta\geq0$ in $T$. 
\end{proof}

 \bigskip \medskip

\begin{proof}[Proof of Proposition \ref{p:reflection}]
When $T$ is isosceles,  condition  \eqref{f:critical} is fulfilled because
$w_T$ is symmetric with respect to the axis of symmetry of $T$.
Conversely, we have to show 
that, if $T$ is not isosceles, 
$w _T$  does not satisfy  \eqref{f:critical}.
With no loss of generality, let  $\angle OAB $ be strictly larger than $\angle OBA$.  
Consider the perpendicular  to $[A, B]$ through $M$, and denote by $N$ its intersection with  $[A, O]$. 
Consider the triangles
$$ \Om:= \triangle (B,N,M) \qquad \text{ and } \qquad \Om ^* = \triangle(N,M,A) \,, $$ 
see Figure \ref{fig:4}.

      \begin{figure} [h] 
     \includegraphics[height=4cm]{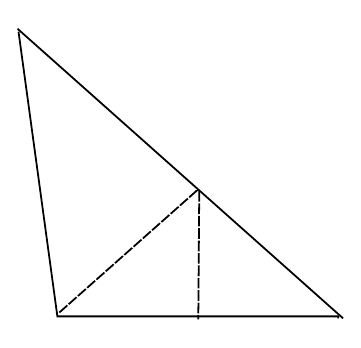}%
         \put(-120, 105){\color[rgb]{0,0,0}\makebox(0,0)[lb]{\smash{$O$}}}%
         \put(-55, 55){\color[rgb]{0,0,0}\makebox(0,0)[lb]{\smash{$N$}}}%
  \put(-110, 0 ){\color[rgb]{0,0,0}\makebox(0,0)[lb]{\smash{$A$}}}%
  \put(-60, 0 ){\color[rgb]{0,0,0}\makebox(0,0)[lb]{\smash{$M$}}} 
  \put(-10, 0 ){\color[rgb]{0,0,0}\makebox(0,0)[lb]{\smash{$B$}}} 
\caption{The geometry of Proposition \ref{p:reflection}  }
\label{fig:4}   
\end{figure}

For any $x \in \Om ^*$,   denote by $x^*$ its reflection  with respect to $[N, M]$, and 
define 
$$z (x):= w_T  (x) - w_T (x^*) \qquad\forall x \in \Om ^*\,.$$  
If $\nu$ is  the normal to $[A,N]$ in direction outward to  $\Om ^*$, by Lemma \ref{l:angular} we have 
$$ \partial _\nu z \geq 0 \qquad \text{ on }  [A, N]\,.$$ 
 Then $z$ satisfies the following boundary value problem in $\Om^*$:
$$
\begin{cases}
\Delta z=0 & \text{ in }\Om^*,\\
z=0 & \text{ on }[N,M]\cup[A,M],\\
\partial_\nu z\geq0 & \text{ on }[A,N].
\end{cases}
$$
We claim that this implies 
\begin{equation}\label{f:positivityw} 
z  \geq 0 \qquad \text{  in  } \Om^*\,.
\end{equation} 
Indeed, for every nonnegative function $v \in H^1(\Om^*)$ having zero trace on $[N,M]\cup[A,M]$, we have
$$
0\leq\int_{[A,N]}v \,\partial_\nu z =\int_{\Om^*}{\rm div}(v\nabla z)=\int_{\Om^*}\nabla z\cdot\nabla v\,.
$$
Choosing $v=z^-:=-\min\{z,0\}$, we get
$$
0\leq-\int_{\{z<0\}}|\nabla z|^2\,,
$$
which implies \eqref{f:positivityw}.

 Moreover, $z\not\equiv0$ in $\Om^*$, since otherwise $w_T$
would be symmetric with respect to $[N,M]$, which would force
$T$ to be isosceles, contrary to our assumption.   
Then, by the Hopf boundary point lemma,
 denoting by $\nu$ 
 the normal to $[A, M]$ in direction outward to  $\Om^*$, we have the strict inequality
 $\partial_{\nu }  z <0$ on $[A, M]$, yielding that  \eqref{f:critical} 
 is not satisfied. \end{proof}
 
\medskip 
 
\section{Proof of Theorem \ref{t:concavity}} \label{sec:concavity}

Recall that $F$ is defined by \eqref{f:effe}, where  
 $\mathcal T (\alpha)$ is the mixed torsional rigidity  of 
the isosceles triangle having equal sides $1$ and vertex angle $\alpha$ (with  Dirichlet side given by  the base opposite to the vertex  angle $\alpha$). 
Notice that, letting  $s :=\frac\alpha2$,  by symmetry, $\mathcal T (\alpha)$ equals twice  the mixed  torsional rigidity  of the triangle with  vertices $(0,0),  (\cos s, 0), (\cos s, \sin s)$ (with Dirichlet condition  on the vertical side).

\medskip 

 \underbar{Step 1} (Pulling back $F$ to a fixed domain).
We let 
$$D= \{(x,y) : 0<x<1, 0<y<x\} \,,$$
and we denote by $\Gamma$ its vertical side. 
 After rescaling by a factor $\frac{1}{\cos s}$ and applying the affine transformation $(x,y) \to (x, \frac{y}{\tan s})$, we  can write
$F (\alpha)$ 
as 
\begin{equation}\label{f:repF}
F(\alpha)= \frac{2s^2}{\tan s} \int_D v_{t}= : G(s).
\end{equation}  
Here,   $t= \frac 1{\tan^2 s}$,  and $v_t$ denotes  the unique solution to  the following problem: 
\begin{equation}\label{f:defvt} v_t \in H^1_\Gamma (D) 
\ :\quad  \mathcal B_t(v_t, \vphi)= \int_D\vphi \quad \forall \vphi \in H^1_\Gamma (D)\,,
\end{equation} 
where 
$\mathcal B_t: H^1_\Gamma(D) \times H^1_\Gamma (D) \to \R$ the coercive and continuous bilinear form defined by
$$\mathcal B_t(u,v):= \int_D \frac{\partial u}{\partial x} \frac{\partial v}{\partial x}+ t \frac{\partial u}{\partial y} \frac{\partial v}{\partial y} dx dy\,.$$  
In view of the representation formula \eqref{f:repF},
our objective is to prove that 
$$ G''(s) <0 \qquad \forall s \in  (0,    \frac {\pi}{2} )\,. $$
We point out that, although  also the variable $t$ appears on  the right--hand side  of \eqref{f:repF}, it is directly related to $s$ by the equality $t= \frac 1{\tan^2 s}$. The reason why  we prefer to retain the notation $t$  is that, in the sequel,  it will make
a series of simplifications  more transparent,  as a consequence of the affine dependence of $\mathcal B_t$ 
 on $t$.

\medskip
 \underbar{Step 2}   (Decomposition of   the function $v_t$).
 Adopting the same notation as in  \cite[Section 3]{GHL26},  we set  
$$p(x):=\frac{1-x^2}4\, , \qquad e(x,y):= \frac{1-x^2}4+y(x-1)\,, $$
 and we let $X,Y:H^1_\Gamma(D)\times H^1_\Gamma(D)\to\mathbb R$ be the bilinear forms defined by
$$
X(u,v):=\int_D \frac{\partial u}{\partial x}\frac{\partial v}{\partial x},
\qquad
Y(u,v):=\int_D \frac{\partial u}{\partial y}\frac{\partial v}{\partial y},
$$
so that
$$
\mathcal B_t=X+tY.
$$
We claim that the function $v_t$ defined in 
\eqref{f:defvt}  can be recast as 
\begin{equation}\label{bbfc01bis}
v_t= q_t+p
\end{equation}
 where 
$q_t$ is the unique solution to 
\begin{equation}\label{f:defqt} q_t \in H^1_\Gamma (D) 
\ :\quad  \mathcal B_t(q_t, \vphi)= X(e, \vphi)  \quad \forall \vphi \in H^1_\Gamma (D)\,.
\end{equation} 

Indeed, we have 
\begin{equation}\label{bbfc01.1} \mathcal B_t(v_t, \vphi)- \mathcal B_t(q_t, \vphi)= \int_D \vphi-X(e, \vphi)=\int_D \vphi-\int_D (-\frac x2+y)\frac{\partial \vphi}{\partial x}
\end{equation}
and
\begin{equation}\label{bbfc01.2}\mathcal B_t(p, \vphi)= \int _D\frac{\partial p}{\partial x} \frac{\partial \vphi}{\partial x} =-\int _D\frac{x}{  2} \frac{\partial \vphi}{\partial x}.
\end{equation}
Then our claim \eqref{bbfc01bis} follows from the equality between the right--hand sides of \eqref{bbfc01.1} and \eqref{bbfc01.2}, 
which can be checked via a simple integration by parts. 
 
Now, in view of \eqref{bbfc01bis}, taking into account that $p$ is independent of $t$
and that $q_t$ depends on $s$ through
$
t=\frac{1}{\tan^2 s},
$
the chain rule shows that the computation of $G''(s)$ only requires
$$\int_D q_t, \, \qquad  \frac{d}{dt} \int_D q_t \, , \qquad  \frac{d^2}{dt^2} \int_D q_t.$$
More precisely, if we set for convenience 
$$M_1(t)= 48\int_D q_t, $$
recalling that  $t= \frac{1}{\tan ^2s}$, after a suitable rearrangement of the terms, we can write
\begin{equation}\label{f:Gsec} 
G''(s)= A_0(s)\Big(\frac 1{16}+ \frac{M_1(t)}{48}\Big)+ A_1(s) \frac{M_1'(t)}{48}+ A_2(s)  \frac{M_1''(t)}{48},
\end{equation} 
with
$$
\begin{aligned}
& A_0(s)=  \frac{4}{\tan s} \Big(\frac{s^2}{\sin ^2 s} -\frac{2s}{\sin s \cos s} +1\Big),
\\ 
& A_1(s)= \frac{4 s }{\tan s \sin^2 s} \Big(\frac{5s}{\tan ^2 s}+3s-\frac{4}{\tan s}\Big),
\\ 
& A_2(s)= \frac{8s^2}{\tan ^3 s\sin^4 s}.
\end{aligned} 
$$ 
Now, the first and second order derivatives $M _ 1' ( t) $ and $M_1'' (t)$ appearing in \eqref{f:Gsec}  
can be expressed in terms of 
 $$M_2(t):= 48 X(q_t, q_t),  \qquad   M_3(t) := 48 X(q_t, r_t)\,,$$ 
 where  $r_t$ is the unique solution to 
 \begin{equation}\label{bbfc00.2}
r_t \in H^1_\Gamma (D) 
\ :\quad  \mathcal B_t(r_t, \vphi)= X(q_t, \vphi)  \quad \forall \vphi \in H^1_\Gamma (D)\,. 
\end{equation} 
Indeed,  differentiating \eqref{f:defqt}  with respect to $t$, we obtain
$$
q_t' \in H^1_\Gamma(D) : \qquad
\mathcal B_t(q_t',\varphi)=-Y(q_t,\varphi)
\qquad \forall \varphi\in H^1_\Gamma(D).
$$  
On the other hand,
$$
\mathcal B_t\!\left(\frac{r_t-q_t}{t},\varphi\right)
=
\frac{1}{t}\Bigl(X(q_t,\varphi)-\mathcal B_t(q_t,\varphi)\Bigr)
=
-Y(q_t,\varphi),
$$
for every $\varphi\in H^1_\Gamma(D)$. Hence, by uniqueness,
$$
q_t'=\frac{r_t-q_t}{t}.
$$
Using this identity, together with the equations satisfied by $v_t$, $q_t$ and $r_t$, we obtain
$$
M_1'(t)
=
-\frac{M_1(t)-M_2(t)}{t},
\qquad
M_1''(t)
=
\frac{2}{t^2}
\Bigl(M_1(t)-2M_2(t)+M_3(t)\Bigr).
$$

\medskip
 \underbar{Step 3} 
(Elementary estimates for $M _ 1 ( t)$, $M _ 1' ( t)$ and $M_ 1'' (t)$. We have 
\begin{equation}\label{bbfc02} \frac{1}{1+4t}< M_1(t) \le \frac{1}{1+t} <1,\end{equation}
\begin{equation}\label{bbfc03}0\le M_3(t) \le M_2(t),\end{equation}
\begin{equation}\label{bbfc04}M_1''(t) \le -\frac 2t M_1'(t) \quad \mbox{ and } \quad M_1'(t) \ge -\frac{M_1(t)(1-M_1(t))}{t}.\end{equation}
To prove \eqref{bbfc02} we first observe that $X(e,p)=0, X(e,e)= \frac 1{48}$ and that
$$\int_Dq_t= \mathcal B_t(v_t, q_t)= \mathcal B_t(q_t+p, q_t)= X(e, q_t)+X(e,p)= X(e, q_t)=-\int_D \frac y2 \frac{\partial q_t}{\partial y}.$$
Hence
$$\Big (\int_Dq_t\Big )^2 =   X(e, q_t) ^2 \le   X(e,e) X(q_t,q_t)= \frac{X(q_t,q_t)}{48}\,,$$
or 
$$M_1(t)^2 \le M_2(t).$$
Similarly, we have 
$$\Big (\int_Dq_t\Big )^2 =   \Big (\int_D \frac y2 \frac{\partial q_t}{\partial y}\Big )^2 \le   \int_D \frac {y^2}{4} \int_D \Big ( \frac{\partial q_t}{\partial y}\Big )^2=\frac{Y(q_t,q_t)}{48}, $$
or
$$M_1(t)^2 \le 48 Y(q_t,q_t).$$
But
$M_1(t)= M_2(t)+ t 48 Y(q_t,q_t)$, hence $M_1(t) \ge (1+t) M_1^2(t)$, and the  right-hand inequality in \eqref{bbfc02} follows.
For the left-hand inequality, we have
\[
 \left(\frac1{48}\right)^2
 = X(e,e)   ^ 2  = \mathcal B_t(q_t,e)^2
 \leq \mathcal B_t(q_t,q_t)\mathcal B_t(e,e)
 =\frac{M_1(t)}{48}\frac{1+4t}{48}.
\]
 Above, we used the elementary evaluation $\mathcal B_t(e,e)= \frac 1{48} + \frac{t}{12}$.

To prove \eqref{bbfc03}, we apply the Cauchy-Schwarz inequality
$$X(r_t,r_t) \le \mathcal B_t(r_t,r_t)= X(q_t,r_t)  \le X(q_t, q_t)^\frac 12 X(r_t, r_t)^\frac 12.$$
Finally, using the expressions of $M_1'(t)$ and $M_1''(t)$ obtained in Step 2, together with \eqref{bbfc03}, we infer  
$$
M_1''(t)
=
\frac{2}{t^2}\bigl(M_1(t)-2M_2(t)+M_3(t)\bigr)
\le
\frac{2}{t^2}\bigl(M_1(t)-M_2(t)\bigr)
=
-\frac{2}{t}M_1'(t).
$$
Moreover, since $M_1(t)^2\le M_2(t)$, we have
$$
M_1'(t)
=
-\frac{M_1(t)-M_2(t)}{t}
\ge
-\frac{M_1(t)-M_1(t)^2}{t}
=
-\frac{M_1(t)(1-M_1(t))}{t}.
$$
This proves 
\eqref{bbfc04}.

\medskip 

 \underbar{Step 4} (Estimate of $G''$).
We first prove that $G''$ satisfies the upper bound
\begin{equation}\label{bbfc05} 
G''(s) \le \frac{1}{12 \tan s} \Phi\big (M_1(\frac{1}{\tan ^2 s}), s\big)  \qquad \forall s \in (0, \frac \pi2)  
  \end{equation}
where 
$$\Phi(M, s):= \frac s{\tan s} R(s) M(1-M)-Q(s)(M+3),$$
with
$$
\begin{aligned}
& R(s)= \frac{s}{\tan s} (\tan^4 s-1) +\frac{4}{\cos^2 s} >0\qquad \forall s \in (0, \frac \pi2)
,
\\ 
& Q(s)=  \frac{1}{\sin ^2 s} \Big( s(\tan s -s)+ (s\tan s-\sin^2 s)\Big )>0 \qquad \forall s \in (0, \frac \pi2).
\end{aligned} 
$$ 
Indeed, since 
$$A_0(s)= -\frac {4Q(s)}{\tan s} \quad \text{ and } \quad  2A_2(s)\tan ^2 s -   A_1(s)= \frac{4sR}{\tan ^4 s} >0.$$
Using the inequalities 
\eqref{bbfc04} established in Step 3, we obtain  
$$\begin{aligned} 
 A_1(s) \frac{M_1'(t)}{48}+ A_2(s)  \frac{M_1''(t)}{48}
 &
 \le \Big (A_1(s)- 2 A_2(s) \tan^2 s\Big ) \frac{M_1'(t)}{48}
 \\
&  \le \frac{s}{12 \tan ^2 s}R(s) M_1(t)(1-M_1(t))\,. 
\end{aligned} $$
Starting from the expression  \eqref{f:Gsec} of $G''$  and using the above estimate, we obtain \eqref{bbfc05}.

 Now, in view of \eqref{bbfc05}, it remains to show that
\begin{equation} \label{f:fino} 
 \Phi\big (M_1(\frac{1}{\tan ^2 s}), s\big)  <0  \qquad \forall s \in (0, \frac{\pi}{2}) \,.\
 \end{equation}
To this end,  we use the fact that, for each fixed $s$, the map $M\mapsto\Phi(M,s)$ is quadratic and concave.  Notice that,  by  \eqref{bbfc02}, $M_1(t)$ belongs to the interval 
$\bigl[\frac{\sin^2 s}{1+3\cos^2 s}, \sin^2 s\bigr]$ (with $t=1/\tan^2 s$). 

We divide the proof  of \eqref{f:fino} into three cases, corresponding to three ranges of $s$ that together cover $(0,\pi/2)$. 
\medskip

\medskip
\noindent {\it Range I: $0<\tan  s \leq2/3$.}
Since the function $\Phi$ is quadratic and concave with respect to the variable $M$, taking also into account that $M_1(t) \le \sin ^2s$, it is enough   to prove the two inequalities: 
$$ \Phi (\sin^2 s, s)<0 \qquad \text{ and } \qquad \frac{\partial \Phi}{\partial M} (\sin ^2 s,s) \ge 0\,.$$

Concerning $ \Phi (\sin^2 s, s)$, its explicit expression reads: 
$$\begin{aligned}\label{eq:Phiu}
 & -\frac{3\tan^6s+2(\tan s -s)\tan^3s(1+\tan^2s)-(\tan s -s)^2(5\tan^4s+6\tan^2s+3)}
 {\tan^2s(1+\tan^2s)} 
 \\
 & < -\frac{20}{9}\frac{\tan^4s}{1+\tan^2s}<0 \,,
 \\ 
 \end{aligned}$$
 where we have used the elementary inequalities 
$5(\frac23)^4+6(\frac23)^2+3\leq539/81<7$ and
$ 0<\tan s -s \leq {\tan^3s}/{3}$.
 
 Concerning $ \frac{\partial \Phi}{\partial M} (\sin ^2 s,s)$, its explicit expression reads:  
$$\begin{aligned} 
 &3-3\tan^2s-\tan^4s+(2\tan^3s-2/\tan s)(\tan s -s)+(3-\tan^2s)(\tan s -s)^2\\
 &\geq 3-\frac{11}{3}\tan^2s-\tan^4s+\frac23\tan^6s
 \geq 3-\frac{44}{27}-\frac{16}{81}
 =\frac{95}{81}>0\,, 
\end{aligned}
$$ where we have used the elementary inequalities   $2\tan^3s-2/\tan s<0$, $\tan s-s\leq \tan^3s/3$, and
$3-\tan^2s>0$. 

\bigskip 
\noindent{\it Range II: $2/3\leq \tan s\leq2$.}
Since for every $0\leq M\leq1$ we  have 
$$\Phi(M, s)= \frac s{\tan s} R(s) M(1-M)-Q(s)(M+3)\le  \frac s{4\tan s} R(s) -3Q(s) \,, $$
it is enough to prove the inequality  
\begin{equation}\label{f:enough} 
12Q(s)-\frac s{\tan s}R(s) >0\,.
\end{equation}  
For a given $s$, let  $a  \mapsto P ( a) $ be the concave quadratic function defined by 
$$
 P(a):=-(\tan ^2s+12+11\tan^{-2}s)a^2+20(\tan s+\tan^{-1}s)a-12\,,
$$ 
so that the left-hand side of \eqref{f:enough} is precisely $P (s)$. 
 
By the integral representation of the arctangent, the ratio $s/\tan s$ can be written as the average over $(0,1)$ of the function $w\mapsto(1+w)^{-1}$ evaluated at $w=v^2(\tan s)^2$. Since this function is convex, Jensen's inequality allows us to replace $v^2(\tan s)^2$ by its average over $(0,1)$, namely $(\tan s)^2/3$. So we have: 
$$
 \frac{s}{\tan s}=\int_0^1\frac{d v}{1+\tan ^2sv^2}
 \geq\frac{1}{1+\frac 13\tan^2s }
$$
Together with the elementary inequality $s\leq\tan s$, this shows that 
 $s$ lies in the interval $[\frac{3\tan s}{\tan^2s+3},  \tan s]$.   Since $P$ is concave, its minimum on this interval is attained
at one of the endpoints. Hence it is enough to check the two
endpoint values:   
 
\[
 P(\tan s)=-\tan^4s+8\tan^2s-3,\qquad
 P\left(\frac{3\tan s}{\tan^2s+3}\right)
 =\frac{3(13\tan^4s+20\tan ^2s-9)}{(\tan ^2s+3)^2}.
\]
 Both are strictly positive when
$4/9\leq \tan^2s\leq4$. 
 
\bigskip 
\noindent{\it Range III: $\tan s \geq2$.}
Set $$ \ell(s):=\frac{\tan^2 s}{\tan^2 s+4}. $$ Since $\tan s\geq2$, we have $\ell(s)\geq1/2$. Moreover, by \eqref{bbfc02}, with $t=1/\tan^2s$, we have $$ M_1(t)\geq \ell(s). $$ Since the map $M\mapsto M(1-M)$ is decreasing on $[1/2,1]$, and since $M_1(t)\in[\ell(s),1]$, we obtain $$ M_1(t)(1-M_1(t)) \leq \ell(s)(1-\ell(s)). $$ At the same time, since $Q(s)>0$ and $M_1(t)\geq\ell(s)$, $$ -Q(s)(M_1(t)+3) \leq -Q(s)(\ell(s)+3). $$ Therefore,
\begin{equation}\label{eq:largePhi} 
 \Phi(M_1 ( t),s)
 \leq \frac s{\tan s}R(s)\ell(s)(1-\ell(s))-Q(s)(\ell(s)+3) 
 =-\frac{4}{(\tan^2s+4)^2}   H(s)     \end{equation}
where we have set
\begin{align*}
 H(s):=&s\left[2\tan^5s+12\tan ^3s+34\tan s+\frac{24}{\tan s}
       -s\left(2\tan ^4s+8\tan ^2s+18+\frac{12}{\tan ^2s}\right)\right]\\
 &\hspace{30mm}-(\tan ^4s+7\tan ^2s+12).
\end{align*}
Thus it remains to show that $H$ is positive in the range under consideration.
For $\tan s\geq2$, we have the simple bounds
\begin{equation}\label{eq:slarge}
 1<s<\frac{2\tan s}{3}.
\end{equation}
Using this upper bound inside the square bracket appearing in the definition of $H$,
we see that the bracket exceeds the positive quantity 
\[
 \frac23\tan ^5s+\frac{20}{3}\tan ^3s+22\tan s+\frac{16}{\tan s}.
\]
We may consequently use $s>1$ outside the brackets, obtaining
\begin{align*}
 H(s)
 &>\frac23\tan ^5s-\tan ^4s+\frac{20}{3}\tan ^3s-7\tan ^2s+22\tan s-12+\frac{16}{\tan s}\\
 &=\tan ^4s\left(\frac{2\tan s}{3}-1\right)
   +\tan ^2s\left(\frac{20\tan s}{3}-7\right)+22 \tan s  -12+\frac{16}{\tan s}>0.
\end{align*}
Every term on the last line is positive when  $\tan s \geq2$.
We conclude that $\Phi(M_1(\frac 1{\tan^2s}),s)<0$ also in the
third range.

\medskip
Combining the three ranges, we  conclude that $G''(s)<0$ for
$0<s<\pi/2$.  
 \qed

\bigskip

 \section*{Statement on AI Assistance.} 
 
The proof of Theorem~\ref{t:concavity} was developed with the assistance of ChatGPT 6 Astra, in particular concerning the trigonometric estimates,  with repeated checks and reworkings by the authors throughout the process.

\section*{
Conflict of Interest and Data Availability.} The authors declare that they have no conflicts of interest.
No datasets were generated or analyzed during the current study.

\end{document}